\long\def\symbolfootnote[#1]#2{\begingroup 
\def\thefootnote{\fnsymbol{footnote}}\footnote[#1]{#2}\endgroup}
\numberwithin{equation}{section}
\newtheorem*{theorem*}{Theorem}
\newtheorem{theorem}{Theorem}[section]
\newtheorem{lem}[theorem]{Lemma}
\newtheorem{thm}[theorem]{Theorem}
\newtheorem{prop}[theorem]{Proposition}
\newtheorem{cor}[theorem]{Corollary}
\theoremstyle{definition}
\newtheorem{rmk}[theorem]{Remark}
\newtheorem{remark}[theorem]{Remark}
\newtheorem{defin}[theorem]{Definition}
\newtheorem{dfn}[theorem]{Definition}
\newtheorem{constr}[theorem]{Construction}
\newtheorem{ex}[theorem]{Example}
\newtheoremstyle{maintheorem}{}{}{\itshape}{}{\bfseries}{}{.5em}{#1 \!\thmnote{#3}.}
\theoremstyle{maintheorem}
\newtheorem*{mainthm}{Theorem}
\newcommand{\R}{\mathbb{R}}
\newcommand{\Z}{\mathbb{Z}}
\renewcommand{\H}{\mathcal{H}}
\newcommand{\C}{\mathcal{C}}
\newcommand{\B}{\mathcal{B}}
\newcommand{\F}{\mathcal{F}}
\newcommand{\norm}{\mathfrak{N}}
\newcommand{\U}{\mathcal{U}}
\renewcommand{\P}{\mathcal{P}}
\newcommand{\Hom}{\operatorname{Hom}}
\newcommand{\Map}{\operatorname{Map}}
\newcommand{\ab}{{\operatorname{ab}}}
\newcommand{\Pic}{{\operatorname{Pic}}}
\newcommand{\hull}{{\operatorname{hull}}}
\newcommand{\sym}{{\operatorname{sym}}}
\newcommand{\rank}{{\operatorname{rank}}}
\newcommand{\Sh}{{\operatorname{Sh}}}
\newcommand{\pr}{{\operatorname{pr}}}
\newcommand{\codim}{{\operatorname{codim}}}
\newcommand{\amp}{\operatorname{amp}}
\newcommand{\ZZ}{\mathcal{Z}}
\newcommand{\zz}{\boldsymbol{z}}
\renewcommand{\subset}{\subseteq}
\newcounter{flocomments}
\renewcommand{\leq}{\leqslant}
\renewcommand{\geq}{\geqslant}
\def\im{\mathrm{im}}
\def\id{\mathrm{id}}
\def\s-{\smallsetminus}
\def\supp{\mathrm{supp} \,}
\def\P{\mathcal{P}}
\renewcommand{\phi}{\varphi}
\begin{document}
	
	\title{The integral polytope group}
	\author{Florian Funke}
	\address{Mathematisches Institut der Universit\"{a}t Bonn, Endenicher Allee 60, 53115 Bonn, Germany}
	\email{ffunke@math.uni-bonn.de}
	
	\begin{abstract}
		We show that the Grothendieck group associated to integral polytopes in $\R^n$ is free-abelian by providing an explicit basis. Moreover, we identify the involution on this polytope group given by reflection about the origin as a sum of Euler characteristic type. We also compute the kernel of the norm map sending a polytope to its induced seminorm on the dual of $\R^n$.
	\end{abstract}
	
	\maketitle

\section{Introduction}

\subsection{Motivation}
The set of polytopes in a real finite-dimensional vector space $V$ forms a commutative monoid under pointwise addition, also called \emph{Minkowski sum}, and we denote its Grothendieck group by $\P(V)$. Given a finitely generated free-abelian group $H$, a polytope in $V_H = H\otimes_\Z\R$ is \emph{integral} if all of its vertices lie in $H$. This condition determines a subgroup $\P(H)\subseteq \P(V_H)$ called the \emph{integral polytope group}. Identifying polytopes which are translates of each other produces a quotient $\P_T(H)$ of $\P(H)$. 

Our motivation for studying these groups is that they are the places where coarse versions of algebraic invariants from low-dimensional topology take values in. These are obtained by (variations of) the following procedure: Let $R$ be a ring and $G$ a group, and denote by $\pr\colon G\to H_1(G)_f$ the projection onto the free part of the first integral homology $H_1(G)$ of $G$. Let $R*G$ be a crossed product ring (e.g., the usual group ring or a twisted Laurent polynomial ring, see \cite[Section 10.3.2]{Lueck2002}) and assume that it does not contain zero-divisors. Then the map
\[ P\colon R*G \s- \{0\}\to \P(H_1(G)_f), \;\; x\mapsto P(x) = \text{convex hull}(\pr(\supp(x)))\]
satisfies $P(x\cdot y) = P(x) + P(y)$. If $R*G$ satisfies the Ore condition (see, e.g., \cite[Definition 8.14]{Lueck2002}) with respect to $T = R*G\setminus\{0\}$, then $P$ passes to the units of the localization $D = T^{-1}(R*G)$ and induces a group homomorphism
\begin{equation} \label{polytope homo}
P\colon D^\times_\ab\to \P(H_1(G)_f), \;\; b^{-1}a\mapsto P(a) - P(b).
\end{equation}
Since $D$ is a skew-field, there is an isomorphism $K_1(D)\cong D^\times_\ab$ given by the Dieudonn\'e determinant \cite[Corollary 2.2.6]{Rosenberg1994}. Thus we may push forward any invariant with values in $K_1(D)$ to a polytope invariant with values in $\P(H_1(G)_f)$. 

This procedure can be applied to twisted and higher-order Alexander polynomials
\cite{McMullen2002, COT2003, Cochran2004, Harvey05, Friedl2007, FriedlHarvey2007, FriedlVidussi2010}. It has most recently been examined and applied by Friedl-L\"uck \cite{FriedlLueck2015, FriedlLueck2015b} to their universal $L^2$-torsion in order to construct the $L^2$-torsion polytope, see also \cite{FunkeKielak2016}. On the other hand, a thorough understanding of the polytope group itself is just beginning to emerge \cite{FriedlLueck2015b, ChaFriedlFunke2015}.

An integral polytope $P\subseteq V_H$ induces a seminorm on $\Hom_\Z(H,\R)$ by setting
\[ \|\phi\|_P = \max\{\phi(p)-\phi(q)\mid p,q\in P\}, \]
and the equation $\|\phi\|_{P+Q} = \|\phi\|_P + \|\phi\|_Q$ is immediate. The set of (set-theoretic) maps $\Map(\Hom_\Z(H, \R), \R)$ is a group under pointwise addition, and we obtain a group homomorphism
\begin{equation}\label{seminorm map}
	\norm\colon\P(H)\to \Map(\Hom_\Z(H, \R), \R), \; P-Q \mapsto \|\cdot\|_P-\|\cdot\|_Q.
\end{equation}
McMullen's Alexander norm, Harvey's higher-order Alexander norms, and the Thurston norm of a compact connected orientable $3$-manifold $M$ with empty or toroidal boundary are in the image of $\norm\circ P$ for suitable varying skew-fields $D$, see \cite{FriedlHarvey2007} and \cite{FriedlLueck2015b}. While the Alexander norm and its higher-order friends are also defined for HNN extensions of free groups, \cite{FriedlLueck2015b} makes way for an analogue of the Thurston norm for these HNN extensions.

One motivation for a better understanding of the integral polytope group is to lift the known inequalities between these seminorms \cite{McMullen2002, Harvey06} to the polytope classes in $\P(H_1(G)_f)$ inducing them. Such a conceptual reason might help to put the newly defined Thurston norm of HNN extensions of free groups into a bigger picture. This strategy was exploited in \cite{FunkeKielak2016} to show that if the free base group is of rank $2$, then this Thurston norm satisfies inequalities with the higher Alexander norms which are completely analogous to the $3$-manifold setting.

\subsection{Connections to toric geometry}

A \emph{toric variety} is an irreducible algebraic variety $X$ containing a torus $T_N \cong (\mathbb{C}^n)^*$ as a Zariski open subset such that the action of $T_N$ on itself extends to an action on $X$, compare \cite[Definition 3.1.1]{Cox2011}. The interplay between toric varieties on the one hand and polytopes on the other hand is well-established, see \cite{Cox2011, Bruns2009, Gelfand2008, Sturmfels1996}. The standard construction producing a toric variety from a full-dimensional integral polytope $P\in \P(\Z^n)$, or more generally from a \emph{fan}, is one example of this connection, see \cite[Definition 2.3.13]{Cox2011} or \cite[Definitiion 4.2]{Gelfand2008}.

Recall that the \emph{Picard group} $\Pic(X)$ of a projective variety $X$ is defined as the group of isomorphism classes of line bundles over $X$ with multiplicaton induced by the tensor product. A line bundle $L$ is \emph{ample} if, roughly speaking, some power $L^k$ admits enough global sections so as to construct an embedding $X\to \mathbb{P}^N$. Ample line bundles determine a subgroup $\Pic_{\amp}(X)\subset \Pic(X)$. If $X$ is a normal projective toric variety, then $X$ is induced by a fan $\mathcal{F}$ as mentioned above, see \cite[Theorem 4.3 (a)]{Gelfand2008}. Thus \cite[Theorem 10.11]{Bruns2009} implies that there is a homomorphism
\[ \Pic_{\amp}(X) \to \P_T(H),\]
where $H$ is a free-abelian group whose rank is equal to the dimension of the Zariski open torus of $X$. The image of this homomorphism is contained in the set of polytopes with normal fan equal to $\mathcal{F}$. We hope that this connection sparks further analysis of the integral polytope group and its subgroups.

\subsection{Results}

It is proved in \cite[Lemma 3.8]{FriedlLueck2015b} that $\P(H)$ embeds into a countably infinite product of infinite cyclic groups. Therefore, a theorem of Specker \cite{Specker1950} states that $\P(H)$ is an infinitely generated free-abelian group for every finitely generated free-abelian group $H$. While this conclusion is interesting, it does not provide any geometric insight. We fill this gap by providing an explicit, geometrically tangible basis for $\P(H)$. More explicitly, given a subgroup $G\subseteq H$, let $\P_T^m(G)$ denote the subgroup of $\P_T(G)$ generated by polytopes of dimension at most $m$. Then our main result is as follows:

\begin{mainthm}[\ref{main theorem} (Basis for the integral polytope group)]
	Let $H$ be a finitely generated free-abelian group. Then there are sets $\B_1\subseteq \B_2 \subseteq  ... \subseteq\B_n\subseteq\P_T(H)$ such that $\B_m\setminus \B_{m-1}$ contains only polytopes of dimension $m$ and $\B_m\cap \P_T(G)$ is a basis for $\P^m_T(G)$ for every pure subgroup $G\subseteq H$ and $1\leq m\leq n$.	In particular, $\B_n$ is a basis for $\P_T(H)$.
\end{mainthm}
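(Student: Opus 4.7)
I would proceed by induction on $m$, at each step adjoining to $\B_{m-1}$ a set of $m$-dimensional polytope classes forming a basis of a complement to $\P_T^{m-1}(H)$ inside $\P_T^m(H)$.

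\emph{Base case $m=1$.} Every one-dimensional integral polytope is, up to translation, a segment $[0,v]$ with $v \in H$ primitive, and the relation $[0,kv] = k\cdot[0,v]$ holds in $\P_T(H)$. Any nontrivial relation among segments from different lines through the origin would, after splitting into positive and negative parts, yield two zonotopes equal up to translation, contradicting uniqueness of the zonotope segment decomposition. Hence $\P_T^1(H) = \bigoplus_L \Z \cdot [0,v_L]$, where $L$ ranges over lines in $V_H$ meeting $H$ nontrivially and $v_L$ is a chosen primitive generator. The resulting set $\B_1$ satisfies the desired compatibility because pure inclusions $G \subseteq H$ preserve primitive vectors.

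\emph{Inductive step.} Every $m$-dimensional integral polytope has a translate whose affine hull equals $V_W$ for a unique pure rank-$m$ subgroup $W \subseteq H$; purity then ensures the translated polytope is integral in $W$. This reduces the problem to the following local statement: for each pure rank-$m$ subgroup $W \subseteq H$, construct a free-abelian subgroup $\mathcal{C}_W \subseteq \P_T^m(W)$ with an explicit basis of $m$-dimensional polytope classes, together with a direct-sum decomposition $\P_T^m(W) = \mathcal{C}_W \oplus \P_T^{m-1}(W)$. Setting $\B_m \setminus \B_{m-1}$ to be the union of the chosen bases of $\mathcal{C}_W$ over all such $W$, the compatibility with a pure $G \subseteq H$ follows from the identification of pure rank-$m$ subgroups of $G$ with pure rank-$m$ subgroups of $H$ contained in $G$, together with a direct-sum argument across the $W$-summands.

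\textbf{Main obstacle.} Producing each $\mathcal{C}_W$, and a fortiori proving that $\P_T^m(W)/\P_T^{m-1}(W)$ is free-abelian, is where the actual work lies. My plan is to stratify $\P_T(W)$ by normal fans: for a rational complete simplicial fan $\Sigma$ in $V_W^*$, the subgroup of classes whose normal fan is refined by $\Sigma$ is finitely generated free-abelian, canonically identified with integer $\Sigma$-piecewise linear functions on $V_W^*$ modulo globally linear ones, and carries an obvious basis indexed by the rays of $\Sigma$. Taking a directed colimit over refinements, with ray classes represented by concrete $m$-dimensional lattice polytopes (e.g.\ elementary simplices on primitive ray vectors), should produce the desired $\mathcal{C}_W$ after quotienting by $\P_T^{m-1}(W)$. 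The principal difficulties I foresee are (a) proving the ray classes remain linearly independent modulo $\P_T^{m-1}(W)$, rather than being absorbed into zonotopic relations; (b) showing the direct sum of the $\mathcal{C}_W$ genuinely injects into $\P_T^m(H)/\P_T^{m-1}(H)$ across different $W$; and (c) arranging every choice naturally enough in $W$ that the basis restricts cleanly along inclusions of pure subgroups. I expect (a) and (c) to be where most of the technical difficulty sits.
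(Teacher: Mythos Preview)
Your inductive skeleton --- primitive segments for $m=1$, then for each $m$ work inside every pure rank-$m$ subgroup $W$ and extend $\B_{m-1}\cap\P_T(W)$ by a set of $m$-dimensional classes --- is exactly the paper's. The divergence is entirely in how to produce the complement $\mathcal{C}_W$ to $\P_T^{m-1}(W)$ inside $\P_T(W)$, and there your proposal is an outline with the hard part left open. The paper does \emph{not} go through normal fans or piecewise-linear functions. Instead it fixes a coordinate direction $\zz$ in $W$ and defines a \emph{shadow map} $\Sh$: for a polytope $B$ in a hyperplane of $V_W$, $\Sh(B)$ is the convex hull of $B$ with its vertical projection to the flat hyperplane at the minimal $\zz$-height of $B$. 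The two facts that drive everything are that $\Sh$ is additive on Minkowski sums, and that any full-dimensional \emph{grounded} polytope (unique flat bottom face) admits a \emph{shadow partition} into pieces $\Sh(F)+k\cdot\ZZ$ indexed by its top faces $F$. One then takes $\mathcal{C}_W=\{\Sh(B):B\in\B_{m-1}\cap\P_T(W),\ \dim\Sh(B)=m\}$. Generation follows by vertically stretching an arbitrary polytope until it cuts into two grounded halves, then applying the shadow partition and the cutting/partition relations; linear independence (both within a single $W$ and across different $W$'s --- your difficulty~(b)) is proved by applying face maps $F_\phi$ for carefully chosen $\phi$, which collapse all but one hyperplane's contribution to dimension $\leq m-2$ and reduce to the inductive hypothesis.

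Two concrete problems with your plan as written. First, ``ray classes represented by elementary simplices on primitive ray vectors'' does not parse: a ray of a fan in $V_W^*$ is a facet-normal direction, and an $m$-simplex has $m+1$ facet normals, so a simplex does not single out one ray; you have not specified which polytope stands for which ray, nor why distinct rays give independent classes. Second, and more seriously for your difficulty~(a): lower-dimensional polytopes in $W$ have perfectly good support functions --- they are piecewise-linear on $V_W^*$, just constant along a line --- so $\P_T^{m-1}(W)$ sits \emph{inside} the PL-function picture rather than being killed by the ``modulo global linear'' step, and quotienting it out is genuinely extra work that your colimit does not do for you. The paper's shadow construction sidesteps both issues by building $\mathcal{C}_W$ directly out of $\B_{m-1}\cap\P_T(W)$, which incidentally makes your compatibility difficulty~(c) automatic. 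Your difficulty~(b) is the one place where the paper's argument (face maps along a separating hyperplane) would transplant to any reasonable choice of $\mathcal{C}_W$.
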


The methods used in its construction also produce a basis for the real vector space $\P(V)$. Moreover, since there is a split short exact sequence
\[ 0 \to H \to \P(H) \to \P_T(H) \to 0,\]
the above theorem also gives a basis for $\P(H)$. 

Secondly, we show that the natural involution $*:\P(V)\to \P(V)$ on the polytope group given by reflection about the origin has a description in terms of the faces of a polytope.

\begin{mainthm}[\ref{main theorem 2} (Involution as face Euler characteristic)]
	Let $V$ be a finite-dimensional real vector space and $P\subseteq V$ be a polytope. Then we have in $\P(V)$
	\[ *P = -\sum_{F\in\F(P)} (-1)^{\dim(F)}\cdot F,\]
	where $\F(P)$ denotes the set of faces of $P$ (including $P$ itself).
\end{mainthm}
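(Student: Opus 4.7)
The plan is to test the identity against the support-function embedding $\P(V)\hookrightarrow \operatorname{Map}(V^*,\R)$, $h_Q(\phi):=\max_{x\in Q}\phi(x)$, which is injective because the monoid of polytopes is cancellative under Minkowski sum and a polytope is determined by its support function. Using $h_{*P}(\phi)=h_P(-\phi)$, the identity to prove becomes
\[
h_P(-\phi)+\sum_{F\in\F(P)}(-1)^{\dim F}\,h_F(\phi)=0 \qquad \text{for all }\phi\in V^*.
\]
Both sides are continuous and piecewise-linear in $\phi$, so I would verify it on the dense open set $U\subseteq V^*$ where every $h_F$ is differentiable, namely the complement of the walls of the common refinement of the dual fans of the faces of $P$. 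On each chamber of $U$ one has $h_F(\phi)=\phi(v_F)$ for the unique $\phi$-maximiser $v_F\in F$ and $h_P(-\phi)=-\phi(v_{\min})$. Since the chamber is full-dimensional, cancelling $\phi$ reduces the identity to the vector equality $\sum_{F\in\F(P)}(-1)^{\dim F}\,v_F = v_{\min}$ in $V$.

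Writing the left-hand side as $\sum_v v\cdot N(v)$ with $N(v):=\sum_{F:\,v_F=v}(-1)^{\dim F}$, it suffices to show $N(v)=\delta_{v,v_{\min}}$ for every vertex $v$. The faces $F\ni v$ of $P$ with $v_F=v$ are in dimension-preserving bijection with the faces $F'$ of the translated tangent cone $C_v:=C_v(P)-v$ (apex at $0$) that lie in the halfspace $H^-:=\{\phi\leq 0\}$, since $v$ is the $\phi$-max of $F$ exactly when $\phi\leq 0$ on the cone $F-v$. Partitioning the sub-cone-complex $X_v:=\bigcup_{F'\subseteq H^-}F'$ by its open cells $\operatorname{relint}(F')\cong\R^{\dim F'}$, each contributing $(-1)^{\dim F'}$ to compactly supported Euler characteristic, gives $N(v)=\chi_c(X_v)$. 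Since $X_v$ is a union of cones meeting only at the apex, the radial homeomorphism $X_v\setminus\{0\}\cong L_v\times\R_{>0}$ (with $L_v$ cut out by a hyperplane transverse to all rays of $C_v$) yields $\chi_c(X_v)=1-\chi(L_v)$, and $L_v$ is identified with the \emph{lower subcomplex} of the vertex figure $\operatorname{Link}(C_v)$ --- a convex polytope $Q$ --- consisting of those faces of $Q$ contained in the halfspace cut out by $\phi$.

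The key topological input, provable via a Bruggesser--Mani line shelling of $\partial Q$ or by discrete Morse theory on the face poset of $Q$, is the classical statement that the lower subcomplex of a convex polytope under a generic linear functional is either empty or a closed topological ball, so $\chi(L_v)\in\{0,1\}$. Emptiness of $L_v$ is equivalent to $X_v=\{0\}$, to $\phi>0$ on $C_v\setminus\{0\}$, and to $v$ being the strict $\phi$-minimum on $P$, i.e., $v=v_{\min}$. This yields $N(v)=\delta_{v,v_{\min}}$ and hence the theorem.

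The main obstacle I anticipate is a self-contained proof of the lower-subcomplex contractibility rather than citing it as a black box. An alternative route is a direct induction on the number of vertices of $P$: removing a $\phi$-maximal vertex changes the face lattice in a controlled way, and the change in $\sum_F (-1)^{\dim F}\,v_F$ can be tracked explicitly to reduce to a smaller polytope.
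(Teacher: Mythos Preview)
Your approach is correct and genuinely different from the paper's. The paper proves the identity by induction on $\dim P$ using the geometric machinery developed earlier for the basis theorem: it establishes a cutting/partition relation for the face Euler characteristic $\chi_\F$, verifies $*\Sh(P)=-\chi_\F(\Sh(P))$ and $*(P+\ZZ)=-\chi_\F(P+\ZZ)$ directly, and then reduces an arbitrary $P$ to these building blocks via vertical stretching and the shadow partition. Your route instead linearises the problem through the support-function embedding $\P(V)\hookrightarrow\operatorname{Map}(V^*,\R)$ and reduces everything to the pointwise identity $N(v)=\delta_{v,v_{\min}}$, a Morse-type statement about convex polytopes. Your argument is more conceptual and independent of the paper's shadow formalism; the paper's argument is more constructive and recycles tools already in place.

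One refinement: the obstacle you flag can be removed without proving contractibility of $L_v$, and your invocation of Bruggesser--Mani is slightly off target, since line shellings naturally handle the \emph{visible} facets from a point rather than the subcomplex of all faces contained in a closed halfspace. You only need $\chi(L_v)=1$ when $L_v\neq\emptyset$, and this drops out of Euler's relation applied to the half-polytope $Q_-:=Q\cap H^-$. With $\partial H^-$ avoiding the vertices of $Q$, the faces of $Q_-$ are exactly: the faces $G\in L_v$; the truncations $G\cap H^-$ for $G$ crossing $\partial H^-$ (same dimension as $G$); and the slices $G\cap\partial H^-$ for the same crossing $G$ (dimension $\dim G-1$, and in bijection with $\F(Q\cap\partial H^-)$). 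The last two families contribute $\sum_{G\text{ crossing}}\big((-1)^{\dim G}+(-1)^{\dim G-1}\big)=0$, so $1=\sum_{F\in\F(Q_-)}(-1)^{\dim F}=\chi(L_v)$. This closes your argument with nothing deeper than Euler's formula for polytopes, and avoids both the shelling and the alternative vertex-removal induction you sketched.
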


Finally, the main theorem of \cite{ChaFriedlFunke2015} states that 
\[\ker \big(\id-*\colon \P(H)\to \P(H)\big) = \im\big(\id + *\colon \P(H)\to \P(H)\big).\]
We prove here the following dual statement.

\begin{mainthm}[\ref{main theorem 3}]
	We have
	\[\ker \big(\id+*\colon \P(H)\to \P(H)\big) = \im\big(\id - *\colon \P(H)\to \P(H)\big)\]
	and 
	\[\ker \big(\id+*\colon \P_T(H)\to \P_T(H)\big) 
	=  \im\big(\id-*\colon \P_T(H)\to\P_T(H)\big).\]
\end{mainthm}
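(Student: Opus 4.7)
My plan is (i) to reduce to $\P_T(H)$, and (ii) to prove the $\P_T(H)$-case by induction on dimension, using the basis $\B_n$ from Theorem 1 and the face-sum identity of Theorem 2.

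For (i): given $x \in \ker(\id + *) \subseteq \P(H)$, its image $\bar x \in \P_T(H)$ lies in $\ker(\id + *)$, so by (ii) we have $\bar x = \bar y - *\bar y$ for some $\bar y$. Lifting $\bar y$ to $y \in \P(H)$, the difference $x - (y - *y)$ lies in $H \subseteq \P(H)$. A direct Minkowski computation yields $(\id - *)[0, e_i] = e_i$ for each standard basis vector $e_i$ of $H$ (via the embedding $h \mapsto [\{h\}] - [\{0\}]$, using $[-e_i, 0] = [0, e_i] + \{-e_i\}$), so $H \subseteq \im(\id - *)$ by $\Z$-linearity, completing the reduction.

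For (ii): Theorem 2 applied to a basis element $b \in \B_m \setminus \B_{m-1}$ of dimension $m$ yields $*b = (-1)^{m+1} b + L_b$, where $L_b \in \P_T^{m-1}(H)$ is the alternating sum of proper face contributions. On the associated graded $V_m := \P_T^m(H)/\P_T^{m-1}(H)$ the involution $*$ acts as $(-1)^{m+1}\id$. Writing $x \in \ker(\id + *) \cap \P_T^m(H)$ as $x = \sum n_b b + x'$ with $x' \in \P_T^{m-1}(H)$, the equation $(\id + *)x = 0$ becomes $(1 + (-1)^{m+1})\sum n_b b + \sum n_b L_b + (\id + *)x' = 0$. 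For $m$ odd the leading coefficient is $2$ and forces each $n_b = 0$, so $x = x'$ and the inductive hypothesis closes the case.

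For $m$ even (the main obstacle) the leading coefficient vanishes, leaving only the lower-dimensional equation $\sum n_b L_b + (\id + *)x' = 0$ with no immediate parity constraint on the $n_b$. One would like to deduce that each $n_b$ is even, for then $y := \sum (n_b/2)\, b$ yields $(\id - *)y = \sum n_b b - \sum (n_b/2)\, L_b$, and the residue $x - (\id - *)y \in \P_T^{m-1}(H) \cap \ker(\id + *)$ is absorbed by induction. This divisibility amounts to injectivity of the connecting map $V_m/2V_m \to \hat H^0(\Z/2, \P_T^{m-1}(H))$ in the Tate long exact sequence of $0 \to \P_T^{m-1} \to \P_T^m \to V_m \to 0$, equivalently to the $\Z/2$-linear independence of the reductions $\bar L_b$ modulo $(\id + *)\P_T^{m-1}(H)$. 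I expect this rigidity to be intrinsic to the basis construction in Theorem 1: a formal $\Z/2$-relation $\sum n_b \bar L_b = 0$ must reflect a genuine Minkowski identity rendering the $b$'s dependent in $\P_T(H)$ and hence excluded from any basis. As a caveat illustrating the subtlety, in $\P_T(\Z^2)$ the triangles $b_1 = \operatorname{conv}\{0, e_1, e_2\}$, $b_2 = \operatorname{conv}\{0, e_1, e_1+e_2\}$, $b_3 = \operatorname{conv}\{0, 2e_1, e_1+e_2\}$ satisfy $\bar L_{b_1} + \bar L_{b_2} + \bar L_{b_3} = 0$ formally, yet the Minkowski identity $b_3 + [0, e_2] = b_1 + b_2$ shows $b_3$ is dependent on $b_1, b_2$ in $\P_T(\Z^2)$ and so cannot appear alongside them in any basis.
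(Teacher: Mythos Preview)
Your reduction (i) from $\P(H)$ to $\P_T(H)$ is correct, and it is the reverse of what the paper does: the paper proves the $\P(H)$ statement first by a direct geometric construction and then deduces the $\P_T(H)$ statement via the map $\sym\colon \P_T(H)\to\P(H)$, $P-Q\mapsto P+*P-(Q+*Q)$.

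The genuine problem is in (ii), in the even-dimensional step, and you have already flagged it yourself: you have not proved that the coefficients $n_b$ are even, only expressed the hope that the basis of Theorem~\ref{main theorem} forces this. In fact the hoped-for rigidity is equivalent to the very statement you are trying to prove. For $m$ even one has $*b=-b+L_b$, hence $L_b=b+*b=(\id+*)b$ exactly. Thus the condition ``$\sum_b n_b L_b\in(\id+*)\P_T^{m-1}(H)$'' reads $(\id+*)\big(\sum_b n_b\, b\big)=(\id+*)z$ for some $z\in\P_T^{m-1}(H)$, i.e.\ $\sum_b n_b\, b - z\in\ker(\id+*)$. Asking that this force all $n_b$ to be even is precisely asking that every element of $\ker(\id+*)\cap\P_T^m(H)$ have even top-degree coefficients, which is the content of $\ker(\id+*)\subseteq\im(\id-*)$ at level $m$ (since $(\id-*)b=2b-L_b$ there). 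So the Tate argument is circular as stated, and the single example you give, while suggestive, does not supply the missing mechanism.

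The paper avoids this parity problem entirely by working geometrically inside $\P(H)$. Given $P+*P=Q+*Q$, it vertically stretches $P$ and $Q$ until both meet $\zz^\perp$ in their full ``compressed'' cross-section, applies the inductive hypothesis to the cross-sections to align them via an auxiliary polytope $R$, and then literally glues the upper half of $P+*R$ to the lower half of $Q+R$ along their common cross-section (the Vertical Gluing Lemma) to obtain a polytope $S$. A short calculation with the cutting relation then shows $(\id-*)(S-B)=A-B$ for $A=P+*R$, $B=Q+R$. No basis and no parity obstruction enter; the even/odd dichotomy you encountered is an artifact of the graded approach, not of the problem itself.
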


It is well-known that two integral polytopes $P$ and $Q$ induce the same seminorm on $\Hom_\Z(H, \R)$ if and only if $P+*P = Q+*Q$. By the latter theorem, this is equivalent to the existence of an integral polytope $R$ such that $P +*R = Q + R$, thus directly relating $P$ and $Q$; see \cref{lem:equivalence norm}. \cref{main theorem 3} is used in \cite[Proposition 6.3]{Funke2017} to put restrictions on the possible shape of the $L^2$-torsion polytope of amenable groups.

\subsection*{Acknowledgements} 

The author was supported by GRK 1150 `Homotopy and Cohomology' funded by the DFG, the Max Planck Institute for Mathematics, and the Deutsche Telekom Stiftung. We thank Stefan Friedl, Fabian Henneke, Dawid Kielak, and Wolfgang L\"uck for many fruitful discussions. We also thank the referee for carefully reading our work and for pointing out important connections to toric geometry.
\newpage
\vspace{7mm}
\tableofcontents

\vspace{3mm}
\section{Preliminaries on the polytope group}

Let $V$ be a finite-dimensional real vector space. A \emph{polytope in $V$} is a subset $P\subseteq V$ which is the convex hull of finitely many points. The \emph{dimension} of $P$ is the dimension of the smallest affine subspace $U\subseteq V$ containing $P$. Its \emph{boundary} $\partial P\subseteq P$ is the boundary of $P$ inside $U$. 

Given two polytopes $P$ and $Q$ in $V$, their \emph{Minkowski sum} is defined as
\[ P + Q = \{ p + q \in V \mid p\in P, q\in Q\}.\]
The Minkowski sum is \emph{cancellative} in the sense that $P_1 + Q = P_2 + Q$ implies $P_1 = P_2$, see e.g. \cite[Lemma 3.1.8]{Schneider1993}. It turns the set of polytopes in $V$ into a commutative monoid. The \emph{polytope group of $V$}, denoted by $\P(V)$, is defined as the Grothendieck group of this monoid, i.e.,  elements in $\P(V)$ are formal differences $P-Q$, subject to the equality $P-Q = P'-Q'$ if and only if $P+Q' = P'+Q$ holds as subsets in $V$. The image of a polytope $P$ in $\P(V)$ will still be denoted by $P$ in order to avoid an overload of notation. 

Let $H$ be a finitely generated free-abelian group. A polytope $P$ in $V_H =H\otimes_\Z \R$ is \emph{integral} if it is the convex hull of finitely many points in $H$ considered as a lattice inside $V_H$. In this case we sometimes say that $P$ is a polytope in $H$. The set of integral polytopes forms a submonoid of the monoid of polytopes in $V_H$, and its Grothendieck group will be denoted by $\P(H)$. 

There is a map of real vector spaces 
\[V\to\P(V), \; v\mapsto \{v\}\] 
and we denote the cokernel of this map by $\P_T(V)$, where the subscript $T$ stands for \emph{translation}. We define $\P_T(H)$ similarly. Thus two integral polytopes in $V_H$ determine the same class in $\P_T(H)$ if and only if they are translates of each other. There are natural inclusions $\P(H)\to\P(V_H)$ and $\P_T(H)\to\P_T(V_H)$. Moreover, a group homomorphism $f\colon H\to H'$ of finitely generated free-abelian groups induces morphisms 
\begin{gather*}
\P(f)\colon\P(H)\to\P(H');\\
\P_T(f)\colon\P_T(H)\to\P_T(H').
\end{gather*}
by sending the class of a polytope $P$ to the class of the polytope $(\R\otimes f)(P)$. If $f$ is injective, then both $\P(f)$ and $\P_T(f)$ are easily seen to be injective as well. Thus if $G\subseteq H$ is a subgroup, then we will always view $\P(G)$ (respectively $\P_T(G)$) as a subgroup of $\P(H)$ (respectively $\P_T(H)$).

Given a polytope $P\subseteq V$, we denote by $*P = \{-p\in V\mid p\in P\}$ the polytope obtained from $P$ by reflection about the origin. We obtain an involution 
\[*: \P(V)\to\P(V), \; P - Q \mapsto *P - *Q\]
which induces involutions on $\P_T(V)$, $\P(H)$, and $\P_T(H)$.

\begin{ex}\label{ex:polytope dim 1}
	Integral polytopes in $V_\Z = \R$ are just intervals $[m,n]\subseteq\R$ starting and ending at integral points. Thus we have $\P(\Z) \cong \Z^2$, where an explicit isomorphism is given by sending the class $[m,n]$ to $(m, n-m)$. Under this isomorphism, the involution corresponds to $*(k,l) = (-l-k, l)$. Similarly, $\P_T(\Z) \cong\Z$, where an explicit isomorphism is given by sending the element $[m,n]$ to $n-m$. The involution $*$ on $\P_T(\Z)$ is the identity.
\end{ex}

\vspace{3mm}
\section{Geometric tools}

In this section we will review a few basics of polytope theory and build up the geometric language used in the construction of a basis for $\P(H)$.

Throughout, we let $\zz\in\R^n$ denote the point $(0,...,0,1)$ and $\ZZ\subseteq\R^n$ the $1$-dimensional polytope whose vertices are $0$ and $\zz$. We denote by $\zz^\perp$ the orthogonal complement of $\zz$ with respect to the standard inner product.

\subsection{Face maps}

Let $V$ be a finite-dimensional real vector space. A \emph{hyperplane} $H\subseteq V$ is a subset of the form $H = \{x\in V\mid \phi(x) = c\}$ for some $\phi\in\Hom(V,\R)$ and $c\in\R$. A hyperplane in $\R^n$ is \emph{horizontal} if it is a translate of $\zz^\perp$, and a polytope in $\R^n$ is \emph{horizontal} if it lies in a horizontal hyperplane.

\begin{defin}
	Given $\phi\in\Hom(V,\R)$ and a polytope $P\subseteq V$, then we let 
	\[ F_\phi(P) = \{p\in P\mid \phi(p) = \max\{ \phi(q)\mid q\in P\} \}\]
	and call it the \emph{face in $\phi$-direction}. A subset $F\subseteq P$ is called a \emph{face} if $F_\phi(P) = F$ for some $\phi\in\Hom(V,\R)$. A face is a polytope in its own right, and its \emph{codimension} is defined as 
	\[ \codim(F\subset P) = \dim(P) - \dim(F).\] 
	A face of codimension $1$ will be referred to as a \emph{facet}. The set of faces of $P$ will be denoted by $\F(P)$. Note that $\F(P)$ includes the codimension $0$ face $P$.
	
	It is easy to see that $F_\phi(P+Q) = F_\phi(P)+F_\phi(Q)$ for any two polytopes. This implies that we obtain an induced \emph{face map}
	\begin{equation} \label{face map}
	F_\phi\colon \P(V)\to \P(V), \; P\mapsto F_\phi(P)
	\end{equation}
	which is a group homomorphism.
	
	If $P\subseteq \R^n$ is a polytope of dimension $n$ and $F$ is a facet, then there is up to positive scalar a unique $\phi\in\Hom(\R^n,\R)$ with $F_\phi(P) = F$. The face $F$ will be called \emph{bottom, vertical}, or \emph{top} face depending on whether $\phi(\zz) < 0$, $\phi(\zz) = 0$, or $\phi(\zz)>0$. 
\end{defin} 

A face $F$ of $P$ is a bottom (resp. vertical, top) face if and only if the face $*F$ of $*P$ is a top (resp. vertical, bottom) face.

\subsection{Height and shadow maps} \label{sec:shadow maps}

Given a subset $S\subseteq \R^n$, the convex hull of $S$ will be denoted by $\hull(S)$. Moreover, we call
\[h(S) = \inf\{ x_n\mid x\in S \}\]
the \emph{height} of $S$. It is obvious that $h(S+T) = h(S) + h(T)$, so that we get an induced homomorphism
\[ h\colon \P(\R^n) \to \R,\; P\mapsto h(P).\]

Given some $h\in\R$ consider the map 
\begin{equation}\label{compression}
	c_h\colon \R^n\to\R^n,\; (x_1,..., x_n)\mapsto (x_1, ..., x_{n-1}, h),
\end{equation}
which we can think of as compressing the vector space to a horizontal hyperplane.

\begin{defin}\label{def:shadow map}
	The \emph{shadow} of a polytope $P\subseteq\R^n$ is defined as
	\[ \Sh(P) = \hull(P \cup c_{h(P)}(P)).\] 
\end{defin}

The shadow of a (integral) polytope is again a (integral) polytope and allows us to increase the dimension in a simple controlled fashion. It comes perhaps not as a surprise that it will be our main tool to build a basis for $\P(\Z^n)$ out of one for $\P(\Z^{n-1})$. In this process, it is crucial that taking shadows preserves the algebraic structure, as shown by the next lemma.

\begin{lem}\label{shadow lemma}
	Given two polytopes $P, Q\subseteq\R^n$, we have 
	\[ \Sh(P + Q) = \Sh(P) + \Sh(Q)\]
	and hence we obtain a well-defined group homomorphism
	\[\Sh \colon \P(\R^n)\to \P(\R^n), \; P\mapsto \Sh(P)\]
	called \emph{shadow map}.
\end{lem}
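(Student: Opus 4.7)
The plan is to prove the set-theoretic identity $\Sh(P+Q) = \Sh(P) + \Sh(Q)$ of polytopes in $\R^n$; the well-definedness of the induced map on the polytope group then follows formally from the cancellativity of Minkowski sum recalled in \cref{sec:preliminaries}. First I would record the two easy compatibilities of the height and compression maps with Minkowski sum: namely $h(P+Q) = h(P) + h(Q)$, and coordinate-wise one checks that
\[ c_{h(P)+h(Q)}(P+Q) = c_{h(P)}(P) + c_{h(Q)}(Q). \]
These identities reduce the whole problem to manipulating convex hulls.

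The inclusion $\Sh(P+Q) \subseteq \Sh(P)+\Sh(Q)$ is the easy direction: the right-hand side is convex (being the Minkowski sum of two polytopes), and it already contains the two generators $P+Q \subseteq \Sh(P) + \Sh(Q)$ and $c_{h(P)+h(Q)}(P+Q) = c_{h(P)}(P) + c_{h(Q)}(Q) \subseteq \Sh(P) + \Sh(Q)$, so it contains their convex hull.

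The reverse inclusion is the main step. Using the general identity $\hull(A) + \hull(B) = \hull(A+B)$, I would expand
\[ \Sh(P) + \Sh(Q) = \hull\Bigl((P+Q) \cup (P + c_{h(Q)}(Q)) \cup (c_{h(P)}(P) + Q) \cup c_{h(P)+h(Q)}(P+Q)\Bigr). \]
The first and fourth sets are precisely the generators of $\Sh(P+Q)$, so it remains to show that the two \emph{mixed} pieces $P + c_{h(Q)}(Q)$ and $c_{h(P)}(P) + Q$ are already contained in $\Sh(P+Q)$. For a typical point $p + c_{h(Q)}(q)$ I would note that it has the same first $n-1$ coordinates as $p+q \in P+Q$ and as $c_{h(P)+h(Q)}(p+q)$, and its last coordinate $p_n + h(Q)$ lies in the interval $[h(P)+h(Q),\, p_n + q_n]$ spanned by the last coordinates of these two points. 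Hence $p + c_{h(Q)}(q)$ lies on the vertical segment joining $p+q$ and $c_{h(P)+h(Q)}(p+q)$, both of which belong to $\Sh(P+Q)$ by definition. The other mixed piece is handled symmetrically.

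The only real obstacle is the verification in the previous paragraph that the mixed pieces lie inside $\Sh(P+Q)$; everything else is a formal rearrangement. Once the set-theoretic equality is established, sending $P \mapsto \Sh(P)$ is a monoid homomorphism on the monoid of polytopes in $\R^n$, so the universal property of the Grothendieck group provides the asserted group homomorphism $\Sh \colon \P(\R^n) \to \P(\R^n)$.
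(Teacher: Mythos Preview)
Your proof is correct and follows essentially the same route as the paper: both reduce to the identity $\hull(A)+\hull(B)=\hull(A+B)$ together with $h(P+Q)=h(P)+h(Q)$, obtain the easy inclusion from the obvious inclusion of generating sets, and handle the reverse inclusion by showing that each mixed point $p+c_{h(Q)}(q)$ lies on the vertical segment between $p+q\in P+Q$ and $c_{h(P+Q)}(p+q)$. Your write-up is in fact slightly cleaner than the paper's, which at the end of the mixed case inadvertently refers back to the right-hand side of its equation rather than the left.
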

\begin{proof}	
	It is well-known that for any subsets $S,T\subseteq \R^n$ we have 
	\[ \hull(S + T) = \hull(S) + \hull(T),\]
	see, e.g., \cite[Theorem 1.1.2]{Schneider1993}. Hence it suffices to show that 
	\begin{equation}\label{shadow equation}
		\hull((P +Q) \cup c_{h(P+Q)}(P + Q)) = \hull((P \cup c_{h(P)}(P)) + (Q \cup c_{h(Q)}(Q))).
	\end{equation}
	
	Note that $h(P+Q) = h(P)+h(Q)$. The inclusion $\subseteq$ already follows from the inclusion of the underlying sets
	\begin{equation} \label{shadow inclusion}
	(P +Q) \cup c_{h(P+Q)}(P + Q) \subseteq (P \cup c_{h(P)}(P)) + (Q \cup c_{h(Q)}(Q)).
	\end{equation}
	
	For the inclusion $\supseteq$, let $p\in P \cup c_{h(P)}(P)$ and $q\in Q\cup c_{h(Q)}(Q)$, and we will show that $p+q$ is contained in the left-hand side of (\ref{shadow equation}). This is obvious if $(p,q)\in P\times Q$ or $(p,q)\in c_{h(P)}(P)\times c_{h(Q)}(Q)$. 
	
	Let us now assume that $p\in P$ and $q\in c_{h(Q)}(Q)$. Write $q = c_{h(Q)}(q')$ for some $q'\in Q$. Then $p+q$ lies on the convex hull of the points $p +q'$ and $c_{h(P)}(p) + q = c_{h(P+Q)}(p+q')$. By inclusion (\ref{shadow inclusion}), these latter points lie in 
	\[\hull((P \cup c_{h(P)}(P)) + (Q \cup c_{h(Q)}(Q)))\] 
	and hence so does $p+q$. The case $p\in c_{h(P)}(P)$ and $q\in Q$ is completely analogous.
\end{proof}

It is straightforward to see that $\Sh\colon \P(\R^n)\to \P(\R^n)$ induces shadow maps on $\P(\Z^n)$, $\P_T(\R^n)$, and $\P_T(\Z^n)$.

\begin{remark}\label{rem:upper maps}
	The choice of $\min$ instead of $\max$ in \cref{def:shadow map} is of course arbitrary. Completely analogously, we may define an \emph{upper height map}
	\[ h^+\colon \P(\R^n)\to\R, \;\; P\mapsto \max\{ x_n\mid x\in P \}\]
	and an \emph{upper shadow map}
	\[ \Sh^+\colon \P(\R^n)\to \P(\R^n), \;\; P\mapsto\hull(P \cup c_{h^+(P)}(P)).\]
	Then the equations
	\[h^+(*P) = -h(P)\;\;\text{ and }\;\; \Sh^+(*P) = *\Sh(P)\]
	are easy to verify.
\end{remark}

\subsection{Partition relations}

In this section we extend results from \cite[Section 3.2]{ChaFriedlFunke2015} on how to manipulate Minkowski sums geometrically. Take a hyperplane $H = \{x\in V\mid \phi(x) = c\}$ for some $\phi\in\Hom(V,\R)$ and $c\in\R$. Given a polytope $P\subseteq V$, the two \emph{halves} of $P$ with respect to $H$ are defined as
\begin{align*} 
	P_+  &= \{ p\in P\mid \phi(p) \geq c\}\\
	P_-  &= \{ p\in P\mid \phi(p) \leq c\}.
\end{align*}

Of course, $\phi$ is unique only up to scalar and so the subscripts in the notation are arbitrary. Note that a half is either empty, a face of $P$ or a subpolytope of codimension $0$.

The geometric process of cutting $P$ along $H$ into the two halves $P_+$ and $P_-$ yields the following algebraic equation.

\begin{lem}[Cutting relation]\label{cutting along a hyperplane}
	Let $P, P_+,P_-, H\subseteq V$ be as above. Then
	\[ P_+ + P_- = P + (P\cap H) .\]
\end{lem}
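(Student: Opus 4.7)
The plan is to establish the set-theoretic equality $P_+ + P_- = P + (P \cap H)$ directly by proving both inclusions. The key observation, used throughout, is that $P \cap H = P_+ \cap P_-$, so every point of $P \cap H$ belongs to both halves simultaneously.

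For the inclusion $\supseteq$, let $p \in P$ and $q \in P \cap H$. If $\phi(p) \geq c$ then $p \in P_+$, while $q \in P_-$, so $p+q \in P_+ + P_-$; if $\phi(p) \leq c$ the roles are symmetric. Either way we land in $P_+ + P_-$.

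For the inclusion $\subseteq$, take $p_+ \in P_+$ and $p_- \in P_-$. The segment $[p_+, p_-]$ is contained in $P$ by convexity, and since $\phi(p_+) \geq c \geq \phi(p_-)$, solving the linear equation $t\phi(p_+) + (1-t)\phi(p_-) = c$ yields some $t \in [0,1]$ such that $q := t p_+ + (1-t) p_-$ satisfies $\phi(q) = c$, i.e.\ $q \in P \cap H$. The complementary point $q' := p_+ + p_- - q = (1-t) p_+ + t p_-$ also lies on the segment $[p_+, p_-]$ and is therefore in $P$. Hence $p_+ + p_- = q' + q \in P + (P \cap H)$, as required.

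There is no real obstacle here. The only minor subtlety is the degenerate case $\phi(p_+) = \phi(p_-) = c$, where $t$ is not uniquely determined; but then $p_+, p_-$ both already lie in $P \cap H$, so any choice of $t$ produces a valid decomposition. The proof is thus essentially a one-line piece of convex geometry: cutting a segment in $P$ that straddles $H$ yields, via reflection at the intersection point, the desired pairing between $P_+ + P_-$ and $P + (P \cap H)$.
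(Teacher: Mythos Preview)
Your argument is correct: both inclusions are established cleanly, and the ``reflection at the midpoint'' trick $q' = p_+ + p_- - q$ is exactly the right idea for the nontrivial direction. The paper does not actually prove this lemma but simply cites \cite[Lemma 3.2]{ChaFriedlFunke2015}; your self-contained convexity argument is the standard one and would serve perfectly well in place of the citation.
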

\begin{proof}
	This is proved in \cite[Lemma 3.2]{ChaFriedlFunke2015}.
\end{proof}

In our application it is necessary to cut a polytope along more complicated subsets. For this the following notion, borrowed from \cite{Khovanskii1997}, will be convenient.

\begin{defin}\label{def:partition}
	A \emph{partition} of a polytope $P\subseteq V$ is a finite set $\Pi$ of polytopes in $V$ such that
	\begin{enumerate}
		\item $\bigcup_{Q\in\Pi} Q  = P$;
		\item\label{def:partition2} If $Q\in\Pi$ and $F\subseteq Q$ is a face, then $F\in\Pi$;
		\item\label{def:partition3} If $Q_1,Q_2\in\Pi$ and $Q_1\cap	Q_2\neq\emptyset$, then $Q_1\cap Q_2$ is a face in both $Q_1$ and $Q_2$.
	\end{enumerate}
	The elements of $\Pi$ that have the same dimension as $P$ are called the \emph{pieces} of $\Pi$. For notational convenience that will become clear in \cref{partition relation}, let 
	\[\Pi^\partial = \{Q\in\Pi\mid Q\not\subseteq \partial P\}.\]
\end{defin}

\begin{ex}\label{ex:partition}
	\begin{enumerate}
		\item\label{trivial partition} Given a polytope $P$, let $\F(P)$ denote the set of all faces of $P$ (including the codimension $0$ face $P$). Then $\F(P)$ is a partition of $P$.
		\item\label{partition from hyp} Let $P\subseteq V$ be a polytope and let $H_1,...,H_m\subseteq V$ be a collection of hyperplanes. Let $\Pi$ be the set that contains the closure of every connected component of $P\setminus\bigcup_{j=1}^m H_j$, together with all its faces. It is easy to see that $\Pi$ is indeed a partition of $P$, which we call the \emph{partition of $P$ with respect to $H_1,..., H_m$}. If $P\cap\bigcup_{j=1}^m H_j\subset \partial P$, then we obtain the trivial partition of part (\ref{trivial partition}) as a special case.
	\end{enumerate}
\end{ex}

The next proposition is an extension of \cref{cutting along a hyperplane} as well as a direct analogue of \cite[Proposition 3]{Khovanskii1997} although the proof is of entirely different nature.

\begin{prop}[Partition relation]\label{partition relation}
	Let $P\subseteq\R^n$ be a polytope and $\Pi$ be a partition of $P$. Then we have in $\P(\R^n)$ the equation
	\[ P = \sum_{Q\in\Pi^\partial} (-1)^{\codim (Q\subseteq P)} \cdot Q.\]
\end{prop}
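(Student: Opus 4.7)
I would prove the partition relation by a double induction, primarily on $n = \dim(P)$ and secondarily on the number of top-dimensional pieces of $\P$. The base case $n = 0$ is immediate, since $P$ is then a single point with $\partial P = \emptyset$, so $\P^\partial = \{P\}$ and the sum reduces to $P$. If instead $\P$ has only one top-dimensional piece, axioms (2)--(3) of \cref{def:partition} force $\P = \F(P)$, again giving $\P^\partial = \{P\}$ and the formula reduces to $P = P$.

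For the inductive step, assume $\P$ has at least two top-dimensional pieces. Any two such pieces meet along a common codimension-$1$ element $F \in \P$, and since $F$ is sandwiched between two full-dimensional elements of $\P$, it cannot be contained in $\partial P$. Let $H$ be the affine hyperplane spanned by $F$, and apply \cref{cutting along a hyperplane} to get
\[ P = P_+ + P_- - (P \cap H) \qquad \text{in } \P(\R^n). \]
The hyperplane $H$ may cross other elements of $\P$, so I would first replace $\P$ by the refinement $\P'$ obtained by intersecting every element with the two closed half-spaces bounded by $H$ and with $H$ itself, closed under taking faces. Applying the cutting relation individually to each element of $\P$ crossed by $H$, and matching signs with codimensions, shows that the formal sum on the right-hand side of the desired equation is unchanged when passing from $\P$ to $\P'$; hence it suffices to prove the formula for $\P'$. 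By construction, the restrictions $\P'|_{P_+}$, $\P'|_{P_-}$, and $\P'|_{P \cap H}$ are genuine partitions: the first two have strictly fewer top-dimensional pieces than $\P'$, while the third is a partition of a polytope of dimension $n - 1$. The inductive hypothesis therefore applies to all three summands of the cutting relation.

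The remaining task is to substitute these three inductive expressions into the cutting relation and check that the terms recombine into $\sum_{Q\in(\P')^\partial}(-1)^{\codim(Q\subseteq P)} Q$. This final bookkeeping is where I expect the main difficulty to lie. The key subtleties are: (i) a polytope $Q \in \P'$ lying in the relative interior of $P \cap H$ appears in all three restrictions, with $\codim(Q \subseteq P_\pm) = \codim(Q \subseteq P)$ but $\codim(Q \subseteq P \cap H) = \codim(Q \subseteq P) - 1$, so the three contributions must telescope into a single copy of $Q$ with the correct sign; (ii) the cut face $P \cap H$ belongs to $\partial P_\pm$ but is not contained in $\partial P$, so the $\partial$-superscripts disagree between parent and children partitions and one must track when a $Q$ that is not in $\P^\partial_\pm$ nevertheless belongs to $(\P')^\partial$; and (iii) elements truly contained in $\partial P$ must still cancel cleanly from the total sum. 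All three points are resolved by a careful case analysis on where $Q$ sits with respect to $H$ and $\partial P$, driven by the codimension identities above.
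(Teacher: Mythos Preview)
Your approach is broadly correct and uses the same engine as the paper --- the cutting relation --- but the induction as written is not well-founded. You declare the secondary induction variable to be the number of top-dimensional pieces of $\P$, yet after refining to $\P'$ you only assert that $\P'|_{P_\pm}$ has fewer pieces than $\P'$. Since refinement by $H$ can increase the number of pieces, ``fewer than $\P'$'' does not allow you to invoke the hypothesis, which was stated relative to $\P$. The fix is to note that $\P'|_{P_\pm}$ in fact has at most $k-1$ pieces, where $k$ is the number of pieces of $\P$: the two pieces $Q_1,Q_2$ adjacent along the chosen facet $F\subset H$ are convex and have $F$ as a face, hence lie entirely on opposite closed sides of $H$, so each contributes a top-dimensional piece to only one half; every other $Q_i$ contributes at most one top-dimensional half to each side. (Also, ``any two such pieces meet along a common codimension-$1$ element'' should read ``some two''; this is a slip, not a gap.)

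By way of comparison, the paper organises the argument differently. It first proves the formula for partitions induced by a finite set of hyperplanes $H_1,\dots,H_m$, by induction on $m$ (so the induction variable decreases automatically when one removes $H_m$ and cuts), using the clean disjoint decomposition $\P^\partial=\P_+^\partial\amalg\P_-^\partial\amalg\P_H^\partial$. It then reduces an arbitrary partition $\P$ to the hyperplane case in one stroke: take $\mathcal H$ to be all hyperplanes through codimension-$1$ elements of $\P$, let $\mathcal Q$ be the induced hyperplane partition, apply the hyperplane case to $P$ and to every $S\in\P$, and combine via $\mathcal Q^\partial=\coprod_{S\in\P^\partial}\mathcal Q_S^\partial$. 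Your route folds these two stages into a single double induction, performing one hyperplane cut and one ``refinement preserves the formula'' step at each stage. This is perfectly viable once the piece count is controlled as above, but the paper's separation makes the induction variable transparently decreasing and confines all the $\partial$-bookkeeping to two disjoint-union identities rather than repeating it at every step.
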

\begin{proof}
	Since the statement does not depend on the ambient space, we may assume that $P$ is full-dimensional.
	
	We first deal with the case that $\Pi$ comes from a collection of hyperplanes $H_1,...,H_m\subseteq\R^n$ as in \cref{ex:partition} (\ref{partition from hyp}). We proceed by induction on $m$, where the base case $m=1$ is precisely Lemma \ref{cutting along a hyperplane}.
	
	For the induction step from $m-1$ to $m$, we denote the two halves of $P$ with respect to $H_m$ by $P_{\pm}$, and write $P_H = P\cap H_m$. We may assume that $P_{\pm}$ are codimension $0$ subpolytopes of $P$ since we could otherwise discard $H_m$ in the collection of hyperplanes without changing the induced partition of $P$. Further let $\Pi_+$ (resp. $\Pi_-$, $\Pi_H$) be the partition of $P_+$ (resp. $P_-$, $P_H$) with respect to $H_1, ..., H_{m-1}$. By the induction hypothesis we have 

	\begin{align}	\label{eq:partition}
	\begin{split}
		P &= P_+ + P_- - P_H\\
		P_\pm &= \sum_{Q\in\Pi_\pm^\partial} (-1)^{\codim  (Q\subseteq P_\pm)} \cdot Q \\
		P_H &= \sum_{Q\in\Pi_H^\partial} (-1)^{\codim (Q\subseteq P_H)} \cdot Q.
	\end{split}
	\end{align}
	
	Because of the boundary condition, we have a disjoint decomposition
	\[ \Pi^\partial = \Pi_+^\partial \cup  \Pi_-^\partial \cup  \Pi_H^\partial,\]
	which, together with (\ref{eq:partition}), implies the desired equation.
	
	For a general partition $\Pi$, let $\H$ be the set of those hyperplanes in $V$ which contain a $(\dim(V)-1)$-dimensional polytope of $\Pi$. Let $\Sigma$ be the partition of $P$ with respect to $\H$. We can think of $\Sigma$ as obtained from $\Pi$ by extending the facets of $\Pi$ through $P$, see \cref{fig:partition}.
	\begin{figure}[h]
		\captionsetup{width=0.7\textwidth}
		\centering
		\begin{tikzpicture}[line cap=round,line join=round,>=triangle 45,x=1.0cm,y=1.0cm, scale = 1]
		\clip(-1.1, -1.6) rectangle (5,3.2);
		\draw[thick]  (-1,-1) -- (-0.5, 1.2) -- (1, 2.5) -- (2.7, 2.7) -- (4.7, 0.3) -- (3, -1.5) -- (-1,-1);
		
		\draw[] (-0.5,1.2) -- (1, 0.5) -- (1.5, -1.3);
		\draw[] (1, 0.5) -- (4.35, 0.7);
		
		\draw[dashed, thick] (0.9, 0.49) -- (-0.65, 0.4);
		\draw[dashed, thick] (1.1, 0.45) -- (3.65, -0.75);
		\draw[dashed, thick]  (0.96, 0.6) -- (0.55, 2.05);
		
		\end{tikzpicture}
		\caption{If the straight lines indicate $\Pi$, then the straight and dashed lines together indicate $\Sigma$.}\label{fig:partition}
	\end{figure}
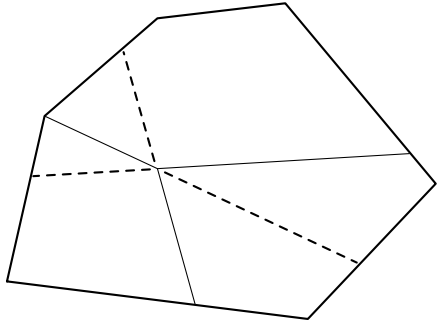
	
	For each $S\in\Pi$ let $\Sigma_S$ be the partition of $S$ with respect to $\H$. From the first part we know
	\begin{align}
		 P = \sum_{Q\in\Sigma^\partial} (-1)^{\codim (Q\subseteq P)} \cdot Q\label{eq1}\\
		 S = \sum_{Q\in\Sigma_S^\partial} (-1)^{\codim (Q\subseteq S)} \cdot Q.\label{eq2}
	\end{align}
	As in the first case, it is straightforward to check that there is a disjoint decomposition
	\begin{equation}\label{smaller partition}
		\Sigma^\partial = \coprod_{S\in\Pi^\partial} \Sigma_S^\partial.
	\end{equation}
	
	Now (\ref{eq1}), (\ref{eq2}), and (\ref{smaller partition}) give
	\begin{align*}
	P &= \sum_{Q\in\Sigma^\partial} (-1)^{\codim (Q\subseteq P)} \cdot Q\\
		 &= \sum_{S\in\Pi^\partial}\sum_{Q\in\Sigma_S^\partial} (-1)^{\codim (Q\subseteq P)} \cdot Q\\
		&= \sum_{S\in\Pi^\partial}\sum_{Q\in\Sigma_S^\partial} (-1)^{\codim (Q\subseteq S) + \codim(S\subseteq P)} \cdot Q\\
		&= \sum_{S\in\Pi^\partial} (-1)^{\codim (S\subseteq P)} \cdot S.
	\end{align*}
\end{proof}

\begin{rmk}
	If $P$ and all of the polytopes in $\Pi$ are integral, then the final step in the previous proof, i.e., the reduction to a partition coming from a collection of hyperplanes produces possibly non-integral polytopes. Nevertheless, the final equation contains only elements in the subgroup $\P(\Z^n)\subseteq\P(\R^n)$.
\end{rmk}

\subsection{The shadow partition}

Recall that a polytope in $\R^n$ is horizontal if it lies in a translate of the hyperplane $\zz^\perp$.

\begin{defin}
	Let $P\subseteq\R^n$ be a polytope of dimension $n$.
	\begin{enumerate}
		\item $P$ is \emph{grounded} if it has only one bottom face and this bottom face is horizontal. This unique bottom face will be called the \emph{ground}.
		\item $P$ is a \emph{pillar} if there is a horizontal polytope $Q$ and a $k> 0$ such that $P = Q+k\cdot\ZZ$.
		\item $P$ is an \emph{almost-pillar} if it has a unique bottom face and a unique top face.
	\end{enumerate}
\end{defin}

We record the following properties and leave their proofs as an easy exercise.

\begin{lem}\label{pillar}
	\begin{enumerate}
		\item Let $P\subseteq\R^n$ be a grounded polytope of dimension $n$ whose ground $G$ is contained in the hyperplane $H = \{ x\in\R^n\mid x_n = h\}$. Then the image of the \emph{grounding map}
		\[ g\colon P\to \R^n, \; (x_1,...,x_n)\mapsto (x_1,...,x_{n-1}, h)\]
		is $G$.
		\item Every pillar is an almost-pillar.
		\item\label{shadow grounded} For any polytope $P\subseteq \R^n$ such that $\dim(\Sh(P)) = n$, $\Sh(P)$ is grounded.
		\item If $P\subseteq\R^n$ is contained in a hyperplane which is not horizontal and \linebreak $\dim(\Sh(P)) = n$, then $\Sh(P)$ is a grounded almost-pillar.
	\end{enumerate}
\end{lem}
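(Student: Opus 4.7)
The common strategy across all four parts is to identify a natural candidate for the required (bottom or top) facet and then verify uniqueness via a direct face-map computation, using the identity $F_\phi(P + Q) = F_\phi(P) + F_\phi(Q)$.

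For (1), the inclusion $G \subseteq g(P)$ is immediate since $g$ restricts to the identity on $G \subseteq H$. For the converse, given $p \in P$ I would let $t^{*} = \min\{t \in \R : (p_1,\ldots,p_{n-1}, t) \in P\}$, which exists by compactness and convexity, so that $q = (p_1,\ldots,p_{n-1}, t^{*}) \in \partial P$. A supporting-hyperplane argument at $q$ --- the downward vertical ray through $q$ meets $P$ only at $q$ --- produces a supporting functional $\phi$ with $\phi(\zz) \leq 0$, and a short local analysis (a small downward perturbation of $q$ must violate some active, non-vertical facet constraint) upgrades this to an active bottom facet at $q$. Groundedness then forces this facet to be $G$, so $g(p) = q \in G$. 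Part (2) is a direct face-map calculation: writing $P = Q + k\ZZ$ with $Q$ flat of dimension $n-1$, one has $F_\phi(P) = F_\phi(Q) + \{0\}$ whenever $\phi(\zz) < 0$, and this face has dimension $n-1$ only when $F_\phi(Q) = Q$, pinning $\phi$ down as a positive multiple of $-e_n^{*}$ and identifying $Q$ as the unique bottom facet; the top case is symmetric.

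For (3), the hypothesis $\dim \Sh(P) = n$ forces $P$ to lie in no vertical hyperplane (else $\Sh(P)$ would too), so the projection $c_{h(P)}(P)$ has dimension $n-1$ inside the flat hyperplane $\{x_n = h(P)\}$. Writing points of $\Sh(P)$ as convex combinations of $P \cup c_{h(P)}(P)$ and using $x_n \geq h(P)$ for all $x \in P$ identifies $F_{-e_n^{*}}(\Sh(P))$ as precisely $c_{h(P)}(P)$. For uniqueness, for any $\phi$ with $\phi(\zz) < 0$ the identity
\[ \phi(c_{h(P)}(p)) - \phi(p) = -(p_n - h(P))\phi(\zz) \geq 0\]
shows $\max_{\Sh(P)} \phi$ is realised on $c_{h(P)}(P)$, so $F_\phi(\Sh(P))$ is a face of this flat $(n-1)$-polytope, strictly lower-dimensional whenever $\phi|_{\zz^\perp} \neq 0$, and hence never a facet unless $\phi$ is parallel to $-e_n^{*}$.

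Finally, (4) adds the unique top facet to the groundedness of (3). Writing $P \subseteq H_P = \{\phi_0 = c_0\}$ with $\phi_0(\zz) > 0$ (after rescaling, using non-flatness of $H_P$), the identities $\phi_0|_P \equiv c_0$ and $\phi_0(c_{h(P)}(p)) = c_0 - (p_n - h(P))\phi_0(\zz) \leq c_0$ yield $F_{\phi_0}(\Sh(P)) = P$; since $\dim P = n-1$ (by the same affine-span argument as in (3)), $P$ is a top facet. For uniqueness I would case-split on whether $\max_{\Sh(P)} \phi$ is attained solely on $P$, solely on $c_{h(P)}(P)$, or on both: in each sub-case the resulting face is a proper face either of $P$ or of the flat polytope $c_{h(P)}(P)$, hence of dimension $< n-1$, unless $\phi$ is a positive multiple of $\phi_0$. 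The main obstacle is this case analysis: one must use non-flatness of $H_P$ to rule out the degenerate possibility that $\phi = \lambda e_n^{*}$ ($\lambda > 0$) produces a second top facet, which fails because $e_n^{*}$ is non-constant on the tilted polytope $P$, so $F_{e_n^{*}}(P)$ is a proper face of $P$.
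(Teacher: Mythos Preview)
Your arguments are correct. The principal difference from the paper is in part (1): the paper argues more directly via the half-space description $P = \{x : \phi_i(x) \leq c_i\}$ over the facets $F_i = F_{\phi_i}(P)$, and simply checks that $g(p)$ satisfies every inequality. The ground facet gives $\phi_1 = -e_n^{*}$, so $\phi_1(g(p)) = -h = c_1$; for the remaining facets, groundedness forces $\phi_i(\zz) \geq 0$, and since $g(p) - p$ is a non-positive multiple of $\zz$ one gets $\phi_i(g(p)) \leq \phi_i(p) \leq c_i$. Thus $g(p) \in P \cap H = G$. Your route via the lowest point $q$ on the vertical line and an active-constraint analysis reaches the same conclusion but with an extra layer (minimality, supporting hyperplane, then the perturbation argument to locate a bottom facet). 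The paper's method avoids this detour, at the cost of invoking the half-space representation; yours stays closer to the face language used elsewhere in the paper.

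For parts (2)--(4) the paper simply declares them obvious, whereas you supply the detailed face-map verifications. Your computations are correct (in particular, for (4) your observation that $\phi(\zz) > 0$ forces $F_\phi(\Sh(P)) = F_\phi(P)$, which is a proper face of $P$ unless $\phi$ is proportional to $\phi_0$, is clean; note that the ``solely on $c_{h(P)}(P)$'' sub-case is actually vacuous once $\phi(\zz) > 0$, so the case-split collapses). These explicit checks are a reasonable expansion of what the paper leaves implicit.
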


\begin{lem}\label{almost-pillars to pillars}
	Let $P$ be a polytope such that $*P$ is a grounded almost-pillar. Let $F$ be the unique bottom face of $P$. Then there exists a pillar $Q$ and a grounded almost-pillar $S$ such that in $\P(\R^n)$ we have
	\[ P = Q + F - S.\]
\end{lem}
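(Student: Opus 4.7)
The plan is to exhibit $Q$ and $S$ explicitly and then verify the identity $P+S=Q+F$ as polytopes by comparing support functions direction by direction.

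First I would collect two structural facts. Let $T$ denote the unique top face of $P$, which is flat because $*P$'s ground is, and set $h^\pm=h^\pm(P)$, $k=h^+-h^-$. Every vertex of $P$ lies on $F$ or $T$: a vertex sitting only on vertical facets would be the intersection of hyperplanes all containing $\zz$, hence at least a line rather than a point; so $P=\hull(F\cup T)$. Moreover, writing $(\cdot)^\flat$ for horizontal projection onto $\zz^\perp$, one has $F^\flat=T^\flat$. Indeed, Lemma \ref{pillar}(1) applied to $*P$ gives $P^\flat=T^\flat$, whence $F^\flat\subseteq T^\flat$. For the reverse inclusion, pick $v\in T$ and consider the vertical line through $v$ intersected with $P$; its lowest point $p$ must lie on a bottom facet of $P$, for otherwise every facet containing $p$ would satisfy $\phi(\zz)\geq0$ and the tangent cone at $p$ would contain $-\zz$, extending $P$ strictly below $p$ along the vertical. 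By uniqueness this bottom facet is $F$, so $v^\flat=p^\flat\in F^\flat$.

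In Case 1, where $F$ is flat, the two structural facts force $P=F+k\cdot\ZZ$ to itself be a pillar. Setting $Q=P+\ZZ$ and $S=F+\ZZ$ produces two pillars (hence grounded almost-pillars), and a direct Minkowski-sum calculation gives $P+S=2F+(k+1)\cdot\ZZ=Q+F$.

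In Case 2, where $F$ is non-flat, take $Q=c_{h^-}(T)+k\cdot\ZZ$ and $S=\Sh(F)$. By Lemma \ref{pillar}(4) the shadow $S$ is a grounded almost-pillar. To verify $P+S=Q+F$ I would split $\phi=(\psi,\alpha)$ with $\alpha=\phi(\zz)$ into three regimes and compare support functions $h_X(\phi):=\max_{x\in X}\phi\cdot x$. For $\alpha>0$, uniqueness of $T$ together with $F^\flat=T^\flat$ forces $h_P(\phi)=h_T(\phi)=h_Q(\phi)$, while the downward extension in $\Sh(F)=\hull(F\cup c_{h^-}(F))$ is dominated by $F$, giving $h_S(\phi)=h_F(\phi)$. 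For $\alpha<0$ the situation dualizes: $h_P(\phi)=h_F(\phi)$ and $h_S(\phi)=h_{c_{h^-}(F)}(\phi)=h_{F^\flat}(\psi)+\alpha h^-$, which equals $h_Q(\phi)=h_{T^\flat}(\psi)+\alpha h^-$ by $F^\flat=T^\flat$. For $\alpha=0$ both sides reduce to $2h_{F^\flat}(\psi)$. Equal support functions force equal Minkowski sums, so $P+S=Q+F$.

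The main obstacle is the structural equality $F^\flat=T^\flat$: without it, the $\alpha=0$ matching of support functions fails and no polytope $S$ of the desired shape can solve $P+S=Q+F$; once this alignment is in hand, the case-by-case support-function bookkeeping is routine.
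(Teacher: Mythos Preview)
Your proof is correct, but it takes a different route from the paper's. Both arguments use the same $S=\Sh(F)$ in the non-pillar case, and your explicit pillar $Q=c_{h^-}(T)+k\cdot\ZZ$ coincides with the paper's $Q$. The difference lies in how the identity $P+S=Q+F$ is established. The paper simply asserts that $Q=P\cup S$ is a pillar and then invokes the cutting relation (\cref{cutting along a hyperplane}) along the hyperplane containing $F$: the two halves of $Q$ are $P$ and $S$, with $Q\cap H=F$, so $P+S=Q+F$ drops out immediately. You instead verify the equality of Minkowski sums by matching support functions direction by direction.

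Your approach is longer but arguably more complete: the paper's one-line claim that $P\cup\Sh(F)$ is a pillar is precisely what your two structural facts $P=\hull(F\cup T)$ and $F^\flat=T^\flat$ are needed to justify, and you prove these carefully. On the other hand, the paper's route stays within the toolkit already developed (the cutting relation) and avoids the case split and support-function bookkeeping. In Case~1 the paper is content to leave the pillar case to the reader, whereas you give explicit $Q$ and $S$; your choice $Q=P+\ZZ$, $S=F+\ZZ$ is perfectly fine, though one could also take $Q=P$ and $S=F+\ZZ$ with the degenerate reading, or simply note $P=P+F-F$ after observing $P$ is already a pillar.
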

\begin{proof}
	The easy case where $P$ is a pillar is left to the reader. If $P$ is not a pillar, define $S =  \Sh(F)$. By the previous lemma $S$ is a grounded almost-pillar. The union $Q = S\cup P$ is a pillar, and cutting $Q$ along $F = P\cap S$ produces the relation
	\[ Q = P + S - F\]
	by the cutting relation (see Lemma \ref{cutting along a hyperplane}).
\end{proof}

The following proposition will be one of the main tools for building a basis since it tells us how a polytope can be decomposed into smaller pieces in a controlled way.  We can then invoke the partition relation (see Proposition \ref{partition relation}) to turn this decomposition into a group-theoretic relation.

\begin{prop}[Shadow partition]\label{shadow partition}
	Let $P\subseteq\R^n$ be a grounded polytope. For every top face $F$ of $P$, let 
	\begin{equation}\label{shadow eq}
		P(F) = \Sh(F) + (h(F)-h(P))\cdot*\ZZ.
	\end{equation}
	Then the set
	\[ \Pi = \bigcup_{\substack{ F\subseteq P \\\text{ top face}}}\;\F(P(F))\] 
	is a partition of $P$ (see also Figure \ref{decomposition}) that will be referred to as the \emph{shadow partition} of $P$. If $P$ is integral, then the shadow partition contains only integral polytopes.
\end{prop}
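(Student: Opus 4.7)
The plan is to verify the three conditions of \cref{def:partition}, of which (2) is automatic from the definition $\P = \bigcup_F \F(P(F))$ since faces of faces are faces.

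I would begin with an explicit geometric description of $P(F)$. Writing points of $\R^n$ as $(x',t)$ with $x'\in\R^{n-1}$ and letting $\pi\colon\R^n\to\R^{n-1}$ be the projection onto the first $n-1$ coordinates, the restriction $\pi|_F$ is injective for any top face $F$, since two points of $F$ differing by a multiple of $\zz$ would violate $\phi(\zz)>0$ for the supporting functional. Letting $f\colon\pi(F)\to\R$ record the $n$-th coordinate of the lift, a direct unravelling of the definitions of $\Sh$ and Minkowski sum with $*\ZZ$ yields
\[
P(F) = \{(x',t)\in\R^n \;:\; x'\in\pi(F),\; h(P)\leq t\leq f(x')\},
\]
so that $P(F)$ is the vertical extrusion of $F$ downward to height $h(P)$.

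From this description I would derive $P(F)\subseteq P$ using \cref{pillar}: groundedness implies $\pi(G) = \pi(P)$ for the ground $G$, so for each $x'\in\pi(F)\subseteq\pi(P)$ the vertical line over $x'$ meets $P$ in a segment $[h(P), b(x')]$ with $b(x')\geq f(x')$ because $(x',f(x'))\in F\subseteq P$. For the covering $\bigcup_F P(F) = P$, given $r\in P$ I would move upward along $\zz$ to the exit point $r+t^*\zz\in\partial P$, which lies in the top boundary of $P$ and hence in some top face $F$; then $\pi(r)\in\pi(F)$ and $r_n\leq f(\pi(r))$, so $r\in P(F)$.

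For condition~(3), standard polytope facts reduce matters to showing that $P(F_1)\cap P(F_2)$ is a common face of $P(F_1)$ and $P(F_2)$ for distinct top faces. The key identity is $\pi(F_1)\cap\pi(F_2) = \pi(F_1\cap F_2)$ with $f_1=f_2$ on this common domain: any $x'$ in the intersection lifts to the unique maximizer of the $n$-th coordinate on $P\cap\pi^{-1}(x')$, and this lift must lie in both $F_1$ and $F_2$, hence in $F_1\cap F_2$. Consequently $P(F_1)\cap P(F_2)$ is the vertical extrusion of the shared face $F_1\cap F_2$, appearing as a side face of each $P(F_i)$. The integrality statement is immediate, since $h(F)$ and $h(P)$ of an integral $P$ are attained at integer vertices, while $\Sh$ together with integer Minkowski multiples of $*\ZZ$ preserves integrality.

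The main obstacle is the identity $\pi(F_1)\cap\pi(F_2) = \pi(F_1\cap F_2)$ together with the agreement $f_1=f_2$ there; both hinge on the fact that the upper boundary of a grounded, full-dimensional polytope is the graph of a single piecewise-linear concave function $b\colon\pi(P)\to\R$ whose maximal linear pieces are precisely the projections of the top faces.
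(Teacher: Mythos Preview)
Your proof is correct and follows essentially the same route as the paper. The paper is terser: for the covering it uses the grounding map (your projection in disguise), for $P(F)\subseteq P$ it appeals only to $h(P(F))=h(P)$ together with groundedness, and for condition~(3) it simply asserts $P(F)\cap P(F')=\Sh(F\cap F')+(h(F\cap F')-h(P))\cdot*\ZZ$ without spelling out the projection identity $\pi(F_1)\cap\pi(F_2)=\pi(F_1\cap F_2)$ that underlies it. Your explicit ``vertical extrusion'' description of $P(F)$ makes these steps more transparent but is not a different argument.
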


\begin{center}
	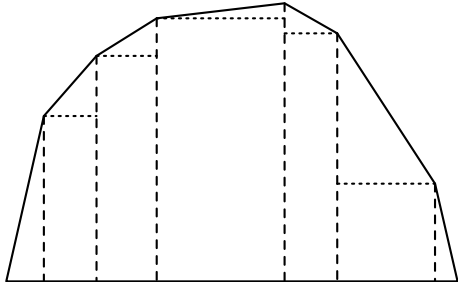
\begin{figure}[h]
		\captionsetup{width=0.7\textwidth}
		\begin{tikzpicture}[line cap=round,line join=round,>=triangle 45,x=1.0cm,y=1.0cm, scale = 1]
		\clip(-1.5, -1) rectangle (5,3.2);
		\draw[thick]  (5,-1) -- (-1,-1) -- (-0.5, 1.2) -- (0.2, 2) -- (1, 2.5) -- (2.7, 2.7) -- (3.4, 2.3) -- (4.7, 0.3) -- (5,-1);
		\draw[dotted, thick]  (-0.5,1.2) -- (0.2, 1.2);
		\draw[dotted, thick]  (0.2,2) -- (1, 2);
		\draw[dotted, thick]  (1,2.5) -- (2.7,2.5);
		\draw[dotted, thick] (2.7, 2.3) -- (3.4, 2.3);
		\draw[dotted, thick]  (3.4, 0.3) -- (4.7, 0.3);
		
		\draw[dashed, thick] (-0.5,1.2) -- (-0.5, -1);
		\draw[dashed, thick] (0.2,2) -- (0.2, -1);
		\draw[dashed, thick] (1,2.5) -- (1, -1);
		\draw[dashed, thick] (2.7,2.7) -- (2.7, -1);
		\draw[dashed, thick] (3.4,2.3) -- (3.4, -1);
		\draw[dashed, thick] (4.7,0.3) -- (4.7, -1);
		\end{tikzpicture}
		\caption{The dashed vertical lines indicate the shadow partition of a $2$-dimensional grounded polytope. Within each $P(F)$ as in (\ref{shadow eq}), the dotted horizontal line is the ground of $\Sh(F)$.}\label{decomposition}
	\end{figure}
\end{center}	

\begin{proof}
	The second condition on a partition, namely that faces of elements in $\Pi$ are themselves in $\Pi$ (see \cref{def:partition}), is clear.
		
	Next we prove $P = \bigcup_{Q\in\Pi} Q$. Let $g\colon P\to G$ be the grounding map. For any point $p\in P$ there exists a top face $F$ and $f\in F$ such that $g(p) =  g(f)$. Then $f$ and $g(p)$ are contained in $P(F)$. Since $p$ is a convex combination of $f$ and $g(p)$ and $P(F)$ is convex, we see $p\in P(F)$. Hence $P \subseteq \bigcup_{Q\in\Pi} Q$. For the reverse inclusion, we observe
	\[ h(P(F)) = h(\Sh(F)) - h(F) + h(P) = h(P)\]
	from which the inclusion $P(F) \subseteq P$ follows since $P$ is grounded.
	
	We finally need to show that for any $Q, Q'\in\Pi$ the intersection $Q\cap Q'$ is empty or a face in both of them. It suffices to do this for elements in $\Pi$ with maximal dimension. If $Q = P(F), Q' = P(F')$, then $Q\cap Q'=\emptyset$ if $F\cap F'= \emptyset$. Otherwise $F\cap F'$ is a face in both $F$ and $F'$, and so
	\[ Q\cap Q' = \Sh(F\cap F') + (h(F\cap F')-h(P))\cdot*\ZZ\]
	is a face in $Q$ and $Q'$.
	
	 If $P$ is integral, then for all top faces of $P$ the shadow $\Sh(F)$ is integral, and $h(F)$ and $h(P)$ are integers. Thus $P(F)$ is integral.
\end{proof}

\subsection{Vertical stretching}

This short section recalls \cite[Lemma 3.3]{ChaFriedlFunke2015} in a slightly more detailed form.

\begin{lem}[Vertical stretching]\label{vertical stretching}
	Let $H = \{x\in\R^n\mid x_n = h\}$ be a horizontal hyperplane. Then for every integral polytope $P\subseteq\R^n$ of dimension $n$, there exists an integer $k\geq 0$ such that for $Q = P + k\cdot(\ZZ + *\ZZ)$ we have:
	\begin{enumerate}
		\item $Q\cap H = c_h(Q)$ is an integral polytope;
		\item One half $Q_+$ of $Q$ with respect to $H$ is grounded;
		\item The other half $Q_-$ is such that $Q_-*$ is grounded.
	\end{enumerate}
\end{lem}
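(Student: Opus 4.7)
The plan is to choose $k$ large enough that the vertical thickening $Q = P + k(\ZZ+*\ZZ) = P + [-k\zz,k\zz]$ brackets the hyperplane $H$: thick enough that the horizontal slice $Q\cap H$ is the full projection of $P$ onto $H$, and thick enough that every slanted facet of $P$, once shifted by $\pm k\zz$, is pushed to the appropriate side of $H$. I assume throughout that $h\in\Z$, which is implicitly needed for (1). Concretely I would set
\[ k = \max\bigl(0,\; h-h(P),\; h^+(P)-h\bigr)\in\Z_{\geq 0}, \]
so that $Q$ is automatically an integral $n$-dimensional polytope.

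For (1), letting $\pi\colon \R^n\to\R^{n-1}$ be the projection forgetting the last coordinate, I would observe that $\pi(Q) = \pi(P)$ and that for each $y\in\pi(P)$ the vertical fiber of $Q$ over $y$ equals $[a(y)-k,\,b(y)+k]$, where $[a(y),b(y)]$ is the fiber of $P$ over $y$. The inequalities $a(y)-k\leq h^+(P)-k\leq h$ and $b(y)+k\geq h(P)+k\geq h$ place $h$ in every such fiber, yielding $Q\cap H = \pi(P)\times\{h\} = c_h(Q)$, which is integral because $\pi(P)$ is.

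For (2) and (3), the key input is the Minkowski-sum facet structure of $Q$. Because face maps are additive on Minkowski sums, every facet of $Q$ has the form $F_\phi(P) + F_\phi([-k\zz,k\zz])$ for some $\phi\in\Hom(\R^n,\R)$. Splitting by the sign of $\phi(\zz)$, each facet is either a translate $F_\phi(P)-k\zz$ of a bottom-pointing facet of $P$ (if $\phi(\zz)<0$), a translate $F_\phi(P)+k\zz$ of a top-pointing facet (if $\phi(\zz)>0$), or a vertical facet $F_\phi(P)+[-k\zz,k\zz]$ obtained from either a vertical facet or a codimension-two face of $P$ (if $\phi(\zz)=0$). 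The choice of $k$ forces every bottom-pointing facet of $Q$ into $\{x_n\leq h^+(P)-k\}\subseteq\{x_n\leq h\}$, so its intersection with $\{x_n\geq h\}$ has codimension at least two in $Q_+$; hence $Q\cap H$ is the only bottom facet of $Q_+$, which is consequently grounded. Symmetrically, every top-pointing facet of $Q$ lies in $\{x_n\geq h(P)+k\}\subseteq\{x_n\geq h\}$, making $Q\cap H$ the unique top facet of $Q_-$; reflecting about the origin then shows that $*Q_-$ is grounded.

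The main obstacle is confirming the facet enumeration above — that the common refinement of the normal fans of $P$ and $[-k\zz,k\zz]$ produces exactly the three families described, with the only ``new'' facets introduced by the thickening being vertical (i.e., having $\phi(\zz)=0$) and therefore irrelevant to the top/bottom analysis. Once that is in place, all required inequalities follow immediately from the choice of $k$.
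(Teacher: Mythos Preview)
Your proposal is correct and matches the paper's approach exactly: the paper specifies $k=\max\{\lvert p_n-h\rvert\mid p\in P\}$, which equals your $\max(0,\,h-h(P),\,h^+(P)-h)$ (the $0$ is redundant since $(h-h(P))+(h^+(P)-h)=h^+(P)-h(P)>0$), and then defers all details to \cite[Lemma 3.3]{ChaFriedlFunke2015}. You have supplied precisely those details---the fiberwise argument for (1) and the Minkowski-sum facet analysis for (2) and (3)---and they are sound; the facet enumeration you flag as the ``main obstacle'' is immediate from additivity of face maps and the three-case behaviour of $F_\phi([-k\zz,k\zz])$ according to the sign of $\phi(\zz)$, so nothing further is needed there.
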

\begin{proof}
	The value $k = \max\{\left|p_n-h\right|\mid p\in P\}$ will do. The details can be found in \cite[Lemma 3.3]{ChaFriedlFunke2015}.
\end{proof}

\vspace{3mm}
\section{A basis for the integral polytope group}

\subsection{The subinduction step: Increasing the dimension of the polytopes}

In this section we construct an explicit basis for $\P(\Z^n)$, built from bases for the various subgroups of $\Z^n$. Roughly speaking, we throw together all these bases and their images under the shadow map. In order for this to work, we need a compatibility condition on these bases which ensures that they fit together. For this, we need the following notation.

\begin{defin}
	Let $H$ be a finitely generated free-abelian group. Given $1\leq m\leq n$, we denote by  $\P_T^m(H)$ the subgroup of $\P_T(H)$ generated by the polytopes of dimension at most $m$.
\end{defin}

Secondly, we want to avoid considering subgroups of the same rank nested inside each other.

\begin{defin}
	Let $H$ be a finitely generated free-abelian group. A subgroup $G\subseteq H$ is \emph{pure} if there is a subspace $U\subseteq V_H$ such that $G = U\cap H$.
\end{defin}

Note that a subgroup of $H$ is pure if and only if it is a direct summand of $H$.

\begin{prop}[Adding the last dimension]\label{add top dimension}
	Let $n\geq 2$. Assume that there are sets $\B_1\subseteq \B_2 \subseteq  ... \subseteq\B_{n-1}\subseteq\P_T(\Z^n)$ such that 
	\begin{enumerate}
		\item $\B_m\cap \P_T(G)$ is a basis for $\P^m_T(G)$ for every pure subgroup $G\subseteq\Z^n$ and $1\leq m\leq n-1$;
		\item $\B_m\setminus \B_{m-1}$ contains only polytopes of dimension $m$.
	\end{enumerate}
	Then there is a set $\C_n\subseteq\P_T(\Z^n)$ containing only polytopes of dimension $n$ such that $\B_{n-1}\cup\C_n$ is a basis for $\P_T(\Z^n)$.
\end{prop}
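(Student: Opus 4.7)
The plan is to study the quotient $V := \P_T(\Z^n)/\P_T^{n-1}(\Z^n)$ and show that $V$ is free-abelian with basis given by the images of the shadows of non-flat $(n-1)$-dimensional elements of $\B_{n-1}$. Once this is established, the short exact sequence
\[0 \to \P_T^{n-1}(\Z^n) \to \P_T(\Z^n) \to V \to 0\]
splits (because $V$ is free), and lifting the basis of $V$ to $n$-dimensional polytope classes defines $\C_n$, making $\B_{n-1}\cup\C_n$ a basis of $\P_T(\Z^n)$. Explicitly, set
\[\C_n := \{\Sh(Q) : Q \in \B_{n-1},\ \dim Q = n-1,\ Q \text{ not flat}\},\]
where $Q$ is \emph{flat} if it is translate-equivalent to a polytope contained in $\zz^\perp$. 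Each such $\Sh(Q)$ is $n$-dimensional.

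For spanning of $V$, let $P$ be an $n$-dimensional integral polytope. Vertical stretching (\cref{vertical stretching}) yields $Q = P + k(\ZZ + *\ZZ)$ whose halves $Q_\pm$ with respect to a flat hyperplane $H$ are respectively grounded and $*$-grounded, with $Q\cap H$ flat. Since $k(\ZZ+*\ZZ)$ and $Q\cap H$ both lie in $\P_T^{n-1}$, the cutting relation gives $[P]\equiv[Q_+]+[Q_-]\pmod{\P_T^{n-1}}$. Applying the shadow partition and partition relation (\cref{shadow partition} and~\ref{partition relation}) to $Q_+$, and noting $[P(F)] = [\Sh(F)] + (h(F)-h(Q_+))[*\ZZ]$, reduces to $[Q_+]\equiv \sum_F [\Sh(F)] \pmod{\P_T^{n-1}}$ over the top faces $F$ of $Q_+$; applying the involution $*$ to the analogous decomposition of $*Q_-$ (\cref{rem:upper maps}) gives $[Q_-]\equiv \sum_G [\Sh^+(G)] \pmod{\P_T^{n-1}}$. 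Finally, the cutting relation applied to the pillar $\Sh(R)\cup\Sh^+(R)$ along the hyperplane of $R$ yields $[\Sh(R)]+[\Sh^+(R)] = [R] + [R'+k\ZZ]$ for some flat $R'$; since $[R'+k\ZZ] = [R']+k[\ZZ]$ lies in $\P_T^{n-1}$, we deduce $[\Sh^+(R)]\equiv -[\Sh(R)]\pmod{\P_T^{n-1}}$. By linearity of $\Sh$ (\cref{shadow lemma}) and expansion of any non-flat $(n-1)$-dimensional polytope in $\B_{n-1}$, the images of $\C_n$ generate $V$.

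For independence, which is the principal obstacle, I exploit that a non-flat $(n-1)$-dimensional polytope $Q$ reappears as a codimension $1$ face of $\Sh(Q)$: specifically $Q = F_{\psi_Q}(\Sh(Q))$ where $\psi_Q$ is the outward normal of the affine hyperplane containing $Q$, and non-flatness guarantees $\psi_Q(\zz)\neq 0$. To rule out a nontrivial relation $\sum a_i\Sh(Q_i)\in\P_T^{n-1}(\Z^n)$ among distinct non-flat $Q_i\in\B_{n-1}$, I would apply face maps $F_{\psi_{Q_i}}$ (homomorphisms by~(\ref{face map})) to isolate each $Q_i$-contribution, and use the induction hypothesis that $\B_{n-1}\cap\P_T(G)$ is a basis for $\P_T^{n-1}(G)$ for all pure $G\subseteq\Z^n$ to conclude $a_i=0$. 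The delicate point is to choose these $\psi$'s and to control the behavior of $F_{\psi_{Q_i}}(\Sh(Q_j))$ for $j\neq i$ so that the induction applies cleanly; this requires understanding how the various shadow polytopes' face structures interact under face maps compatible with $\B_{n-1}$'s basis property for all pure subgroups.
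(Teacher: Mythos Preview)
Your construction of $\C_n$ and your spanning argument match the paper's approach; the paper handles $P_-$ by showing $\langle\B_n\rangle$ is $*$-closed via \cref{almost-pillars to pillars}, whereas your pillar relation $[\Sh^+(R)]\equiv -[\Sh(R)]$ in the quotient achieves the same end and is correct.

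The independence argument, however, has a genuine gap, and you have slightly mislocated it. Controlling $F_{\psi}(\Sh(Q_j))$ for $j\neq i$ is in fact not the hard part: choosing $\psi$ with $\psi(\zz)>0$ normal to the hyperplane $H$ of $Q_i$, one checks that $F_\psi(\Sh(Q_j))=Q_j$ whenever $Q_j\subset H$ (these are then \emph{distinct} elements of $\B_{n-1}\cap\P_T(G)$, $G=H\cap\Z^n$), and that $\dim F_\psi(\Sh(Q_j))\leq n-2$ otherwise, since then $F_\psi(\Sh(Q_j))=F_\psi(Q_j)$ is a proper face of $Q_j$. The real obstruction is on the other side of the relation $\sum_i a_i\Sh(Q_i)=x\in\P_T^{n-1}(\Z^n)$: applying $F_\psi$ produces $F_\psi(x)\in\P_T(G)$, and nothing prevents $Q_i$ itself from appearing with nonzero coefficient in the $\B_{n-1}\cap\P_T(G)$-expansion of $F_\psi(x)$. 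You would then only conclude that $a_i$ equals that coefficient, not that $a_i=0$. Working in the quotient $V$ obscures this, because it treats $x$ as a black box and hides its $\B_{n-1}$-coordinates.

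The paper resolves this with an auxiliary claim proved \emph{before} the main step: in the full relation (with $x$ expanded as $\sum_k b_k B_k$ over $\B_{n-1}$), every coefficient $b_k$ of a non-flat $(n-1)$-dimensional $B_k$ already vanishes. The trick is to apply a face map in the \emph{downward} direction, i.e.\ with $\phi(\zz)<0$ and $\ker\phi$ the hyperplane of $B_k$. Since each $\Sh(Q_j)$ is grounded (\cref{pillar}~(\ref{shadow grounded})), $F_\phi(\Sh(Q_j))$ is a face of its flat ground, hence of dimension $\leq n-2$; thus after applying $F_\phi$ the only $(n-1)$-dimensional survivors are the $B_k$ lying in $\ker\phi$, and the basis hypothesis for $\B_{n-1}\cap\P_T(\ker\phi\cap\Z^n)$ forces their coefficients to zero. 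Only once this is established does the upward face map $F_\psi$ cleanly isolate $a_i$, because the potentially colliding term $b_k Q_i$ on the right has already been killed.
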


\begin{proof}
	Let
	\[ \C_n = \{\:\Sh(B) \mid B\in\B_{n-1}, \;\Sh(B) \text{ is $n$-dimensional }\}.\]

	We first prove that $\B_n := \B_{n-1}\cup\C_n$ is a generating set for $\P_T(\Z^n)$. Let $\langle S \rangle\subseteq\P_T(\Z^n)$ denote the subgroup generated by a subset $S$. 
	
	Let $P\subseteq\R^n$ be an integral polytope. We may assume without loss of generality that $P$ is $n$-dimensional, otherwise add the unit $n$-cube to it which we already know to lie in $\langle\B_1\rangle\subseteq\langle\B_n\rangle$ -- the $n$-cube is the Minkowski sum of horizontal and vertical unit segments.
	
	Note that in $\P_T(\Z^n)$ we have $\ZZ = *\ZZ$. By vertical stretching (see \cref{vertical stretching}), there is $k\in\Z$ and a horizontal hyperplane $H$ such that $P + k\cdot\ZZ$ intersects $H$ in an integral polytope $P'$ and cutting along this intersection produces a grounded half $P_+$ and a half $P_-$ such that $*P_-$ is grounded. By the cutting relation (see \cref{cutting along a hyperplane}), we have in $\P_T(\Z^n)$ 
	\[ P = P_+ + P_- - P' - k\cdot\ZZ,\]
	and since $P'$ and $\ZZ$ lie by assumption in $\langle\B_{n-1}\rangle$, it suffices to deal with $P_+$ and $P_-$ individually.
	
	First we take care of $P_+$. Note that $P_+$ is a grounded polytope with ground $P'$. Let $\Pi$ be the shadow partition of $P_+$ in the sense of \cref{shadow partition}. All polytopes in $\Pi$ of dimension at most $n-1$ lie in $\langle\B_{n-1}\rangle$. The remaining elements of $\Pi$ are of the form
	\[ 	P(F) = \Sh(F) + (h(F)-h(P))\cdot*\ZZ\]
	for some top face $F\subset P_+$. If we show that all the polytopes $P(F)$ lie in $\langle\B_n\rangle$, then the partition relation (see \cref{partition relation}) implies $P_+\in \langle\B_n\rangle$.
	
	By assumption there are $B_i\in \B_{n-1}$ and $\lambda_i\in\Z$ such that $F = \sum_{i=1}^k \lambda_i\cdot B_i$. By \cref{shadow lemma} the shadow map is a group homomorphism, so we have 
	\begin{equation}\label{shadow of face}
		\Sh(F) = \sum_{i=1}^k \lambda_i\cdot \Sh(B_i).
	\end{equation}
	If $\Sh(B_i)$ is $n$-dimensional, then $\Sh(B_i)\in\C_n\subseteq\B_n$, and otherwise $\Sh(B_i)\in\langle\B_{n-1}\rangle$. Hence it follows from (\ref{shadow of face}) that $\Sh(F)$ and therefore $P(F)$ lie in $\langle\B_n\rangle$.
	
	In order to deal with $P_-$, it suffices to show that $\langle\B_n\rangle$ is closed under the involution. Let $B\in\B_n$. Again by assumption, there is nothing to prove if $B\in\B_{n-1}$, so let $B\in\C_n$. Then $B$ is a grounded almost-pillar by \cref{pillar}. \cref{almost-pillars to pillars} applies to produce a pillar $Q$ and a grounded almost-pillar $S$ such that
	\[ *B = Q + *F - S,\]
	where $F$ is the top face of $B$. We have $Q, *F\in\langle\B_{n-1}\rangle$. Since $S$ is a grounded polytope, we may proceed with it as with $P_+$ to verify $S\in \langle\B_n\rangle$, and so $*B\in \langle\B_n\rangle$. This completes the proof that $\langle\B_n\rangle = \P_T(\Z^n)$.
	\medskip
	
	Next we prove that $\B_n$ is linearly independent. Assume that we have pairwise distinct elements $P^i_j\in\B_i\setminus\B_{i-1}$ and integers $\lambda_j^i\in\Z$ for $1\leq i\leq n$ and $1\leq j\leq s_i$ such that
	\begin{equation} \label{linear combination}
		\sum_{i=1}^n \sum_{j=1}^{s_i} \lambda_j^i\cdot P_j^i = 0.
	\end{equation}
	
	Since $\B_{n-1}$ is linearly independent, it suffices to show that $\lambda_k^n = 0$ for all $1\leq k\leq s_n$. For this we first need an auxiliary step.
	
	\medskip
	\emph{Claim:} If $P_k^{n-1}\in\B_{n-1}$ such that $\Sh(P_k^{n-1})$ is $n$-dimensional, then $\lambda_k^{n-1} = 0$.
	\smallskip
	
	Let $H\subseteq\R^n$ be the rational hyperplane containing $P_k^{n-1}$. Then $G = H\cap \Z^n$ is a pure subgroup. Since $\Sh(P_k^{n-1})$ is $n$-dimensional, $H$ is not horizontal, so there is $\phi\in\Hom(\R^n,\R)$ and $c\in\R$ with $H = \{x\in\R^n\mid \phi(x) = c\}$ and $\phi(\zz) < 0$. Since face maps are linear (see (\ref{face map})), applying the face map in $\phi$-direction to (\ref{linear combination}) yields the equation
	\begin{equation}\label{linear combination 2}
		\sum_{i=1}^n \sum_{j=1}^{s_i} \lambda_j^i\cdot F_\phi(P_j^i) = 0.
	\end{equation}
	in $\P_T(G)$. We claim that $F_\phi(P_j^i)$ has dimension $n-1$ if and only if $i = n-1$ and $P_j^i\in\P_T(G)$. The 'if'-part is obvious. The 'only if'-part is obvious except for the full-dimensional $P_j^n$, $1\leq j\leq s_n$. But since $P_j^n$ is grounded by Lemma \ref{pillar} (\ref{shadow grounded}) and $\phi(\zz) < 0$, we have $F_\phi(P_j^n) = F_\phi(A)$, where $A$ is the ground of $P_j^n$. Since $A$ is horizontal and $\phi(\zz) < 0$, $F_\phi(A)$ is a proper subface of $A$ and thus at most $(n-2)$-dimensional.
	
	This means that (\ref{linear combination 2}) breaks up into a sum $x$ of $(n-1)$-dimensional elements in $\B_{n-1}\cap\P_T(G)$ and a sum $y$ in $\P_T^{n-2}(G)$. Since the basis $\B_{n-1}\cap \P_T(G)$ of $\P_T^{n-1}(G)$ extends the basis $\B_{n-2}\cap \P_T(G)$ of $\P_T^{n-2}(G)$, this can only happen if $x = y = 0$. Hence $\lambda_j^{n-1} = 0$ for all $j$ such that $P_j^{n-1}\in\P_T(G)$ which includes in particular $j = k$. This proves the claim, which brings us to the original goal.
	
	\medskip
	\emph{Claim:} For all $1\leq k\leq s_n$ we have $\lambda_k^n = 0$.
	\smallskip
	
	Write $P_k^n = \Sh(B)$ for some $B\in\B_{n-1}$, and let $H$ be the affine rational hyperplane containing $B$. Take $\psi$ with $H = \{x\in\R^n\mid \psi(x) = c\}$ and $\psi(\zz) > 0$, and let $G = \Z^n\cap H$. Notice that then
	\[ F_\psi(P_k^n) = F_\psi(B) = B,\]
	but the previous claim ensures that $\lambda_k^{n-1} = 0$ if $P_k^{n-1} = B$. Thus the summands in 
	\begin{equation*}
		\sum_{i=1}^n \sum_{j=1}^{s_i} \lambda_j^i\cdot F_\psi(P_j^i) = 0
	\end{equation*}
	are distinct elements of $\B_{n-1}\cap\P_T(G)$ and elements lying in $\P_T^{n-2}(G)$. By the same argument as in the previous claim we deduce $\lambda_k^n = 0$.
\end{proof}

\subsection{The induction step: Increasing the rank}

We are now in a position to prove the main result of this paper, i.e., the existence of a geometrically tangible basis for the integral polytope group. 

\begin{constr}
We construct subsets $\B_1\subseteq \B_2 \subseteq  ... \subseteq\B_n\subseteq\P_T(\Z^n)$ by induction as follows.
For the base case, let $\B_1$ be the set of (translation classes of) $1$-dimensional polytopes in $\P_T(\Z^n)$ which are not a proper multiple of another (translation class of a) $1$-dimensional polytope in $\P_T(\Z^n)$. 

For the induction step from $m-1$ to $m$, we assume that the sets $\B_1\subseteq  ... \subseteq\B_{m-1}$ have been constructed. Consider the set
\[\U_{m} = \{U\subseteq \Z^n\mid U \text{ is a pure subgroup of rank } m\}.\]
Given some $U\in \U_{m}$, Proposition \ref{add top dimension} allows us to extend $\B_{m-1} \cap \P_T(U)$ to a basis $\B_m^U$ of $\P_T(U)$. Now put
\[ \B_m = \bigcup_{U\in\U_m} \B_m^U.\]
\end{constr}

\begin{thm}[Basis for the integral polytope group]\label{main theorem}
	The sets $\B_1\subseteq \B_2 \subseteq  ... \subseteq\B_n\subseteq\P_T(\Z^n)$ constructed above have the following properties:
		\begin{enumerate}
			\item $\B_m\cap \P_T(G)$ is a basis for $\P^m_T(G)$ for every pure subgroup $G\subseteq\Z^n$ and $1\leq m\leq n$;
			\item $\B_m\setminus \B_{m-1}$ contains only polytopes of dimension $m$.
		\end{enumerate}
	 In particular, $\B_n$ is a basis for $\P_T(\Z^n)$.
	 
	 Moreover, if $\mathcal{A}\subset \Z^n$ denotes a basis of $\Z^n$ and $\B_n'\subset \P(\Z^n)$ is a set of representatives for $\B_n\subseteq\P_T(\Z^n)$, then $\mathcal{A}\cup \B_n'$ is a basis for $\P(\Z^n)$.
\end{thm}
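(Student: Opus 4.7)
The plan is to construct the filtration $\B_1 \subseteq \ldots \subseteq \B_n \subseteq \P_T(\Z^n)$ by induction on $m$, at each stage applying \cref{add top dimension} to every pure rank-$m$ subgroup of $\Z^n$. The base case $m=1$ is immediate from \cref{ex:polytope dim 1}: for each primitive vector $v$, the group $\P_T(\Z\cdot v)\cong\Z$ is freely generated by the class of $[0,v]$, and $\B_1$ is declared to consist of one such class per pure rank-$1$ subgroup.

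For the step $m-1 \to m$, fix for each pure rank-$m$ subgroup $G \subseteq \Z^n$ an isomorphism $G \cong \Z^m$. Since pure subgroups of $G$ are exactly the pure subgroups of $\Z^n$ contained in $G$, the filtration $\B_1\cap\P_T(G)\subseteq \cdots\subseteq\B_{m-1}\cap\P_T(G)$ meets the hypotheses of \cref{add top dimension} for $\Z^m$, producing a set $\C_m^G\subseteq\P_T(G)$ of $m$-dimensional polytopes extending $\B_{m-1}\cap\P_T(G)$ to a basis of $\P_T(G)=\P_T^m(G)$. Put $\B_m=\B_{m-1}\cup\bigcup_G \C_m^G$, the union ranging over pure rank-$m$ subgroups of $\Z^n$ (reducing to $\Z^n$ itself when $m=n$). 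The condition that $\B_m\setminus\B_{m-1}$ contains only $m$-dimensional polytopes is clear. For a pure subgroup $G'$ with $\rank G' < m$, no element of $\bigcup_G\C_m^G$ lies in $\P_T(G')$, so $\B_m\cap\P_T(G')=\B_{m-1}\cap\P_T(G')$ remains a basis of $\P_T^{m-1}(G')=\P_T^m(G')$; the case $\rank G' = m$ is the proposition's output.

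The main obstacle is verifying the compatibility when $r = \rank G' > m$: that $\B_m\cap\P_T(G')$ is still a basis for $\P_T^m(G')$. Generation is clear since every $m$-dimensional polytope in $G'$ translates into some pure rank-$m$ subgroup $G\subseteq G'$. For linear independence, given a relation $x+\sum_G y_G = 0$ in $\P_T(G')$ with $x$ a $\B_{m-1}$-combination and each $y_G$ a $\C_m^G$-combination, fix some $G_1$ appearing in the sum and choose $\phi\in\Hom(V_{G'},\R)$ that vanishes on $V_{G_1}$ but not on $V_G$ for any other $G$ in the sum. Such $\phi$ exists because finitely many proper subspaces cannot exhaust $V_{G_1}^\perp$. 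Applying the face map $F_\phi$ from \cref{face map} preserves the $m$-dimensional polytopes of $G_1$ (whose affine hulls translate into $\ker\phi$) but strictly drops the dimension of those from other $G$'s, forcing $y_{G_1}\in\P_T^{m-1}(G')\cap\P_T(G_1)=\P_T^{m-1}(G_1)$; the linear independence of the basis $(\B_{m-1}\cap\P_T(G_1))\cup\C_m^{G_1}$ of $\P_T^m(G_1)$ then yields $y_{G_1}=0$. Iterating over each $G$ forces every $y_G$ to vanish, reducing to $x = 0$, whose coefficients are zero by the inductive hypothesis at level $m-1$.

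For the final assertion, the map $\Z^n\hookrightarrow\P(\Z^n)$, $v\mapsto\{v\}$, sits in the short exact sequence $0\to\Z^n\to\P(\Z^n)\to\P_T(\Z^n)\to 0$, which splits because $\P_T(\Z^n)$ is free abelian with basis $\B_n$. Any choice of representatives $\B_n'\subseteq\P(\Z^n)$ for $\B_n$ yields such a splitting, so $\mathcal{A}\cup\B_n'$ is a basis of $\P(\Z^n)\cong\Z^n\oplus\P_T(\Z^n)$.
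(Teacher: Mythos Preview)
Your proof follows the paper's exactly: induct on $m$, apply \cref{add top dimension} inside each pure rank-$m$ subgroup, and for linear independence isolate one such subgroup at a time with a face map in a direction vanishing on it but on none of the others. The only step that is not automatic is the equality $\P_T^{m-1}(G')\cap\P_T(G_1)=\P_T^{m-1}(G_1)$; it holds because composing face maps $F_{\psi_1}\circ\cdots\circ F_{\psi_{r-m}}$, with the $\psi_i$ spanning the annihilator of $V_{G_1}$ in $V_{G'}^*$, yields a dimension-nonincreasing retraction of $\P_T(G')$ onto $\P_T(G_1)$, and you should say so.
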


\begin{proof}
	The proof also proceeds by induction. Clearly, $\B_1\cap \P_T(G)$ is a generating set for $\P^1_T(G)$ provided that $G\subset \Z^n$ is a \emph{pure} subgroup. On the other hand, the edges of the Minkowski sum of pairwise non-parallel segments are translates of these segments. This readily implies that $\B_1$ is linearly independent.

	For the induction step it is clear that $\B_m\s- \B_{m-1}$ contains only polytopes of dimension $m$. We need to verify that $\B_m\cap \P_T(G)$ is a basis for $\P_T^m(G)$ for every pure subgroup $G\subseteq \Z^n$. For $\rank(G)\leq m-1$ this is obvious since $\B_m\cap \P_T(G) = \B_{m-1}\cap \P_T(G)$ and $\P_T^m(G) = \P_T^{m-1}(G)$. If $\rank(G) = m$, then $G\in \U_m$, and thus $\B_m\cap \P_T(G)  = \B_m^G$ and $\P_T(G) = \P_T^m(G)$. Finally, let $\rank(G) > m$ and consider the set
	\[\U_m^G = \{U\subseteq G\mid U \text{ is a pure subgroup of rank } m\}\subseteq\U_m.\]
	
	Then $\P_T^m(G)$ is generated by the union of all $\P_T(U)$ with $U\in \U_m^G$. On the other hand, each such $\P_T(U)$ is generated by $\B_m^U\subseteq \B_m$. This shows that $\B_m\cap \P_T(G)$ generates $\P_T^m(G)$. It remains to prove that $\B_m$ is linearly independent. This is in very much the same spirit as the corresponding proof of Proposition \ref{add top dimension}. 
	
	Let $P_i \in \B_m$ be pairwise distinct elements and $\lambda_i\in\Z$ ($1\leq i\leq k$) such that 
	\[\sum_{i=1}^s \lambda_i\cdot P_i = 0.\] 
	Again it suffices to prove $\lambda_i = 0$ for all $i$ such that $P_i\in\B_m\setminus\B_{m-1}$ since $\B_{m-1}$ is assumed to be linearly independent. For a fixed $P_j\in \B_m\setminus \B_{m-1}$, let $U\in\U_m$ be such that $P_j\in\P_T(U)$. Let $H\subseteq\R^n$ be a rational hyperplane such that 
	\begin{equation}\label{hyperplane}
		U\cap U' = H\cap U'
	\end{equation}
	for every $U'\in\U_m$ for which there exists an index $i$ with  $P_i\in\P_T(U')\cap (\B_m\setminus \B_{m-1})$. Pick $\phi\in\Hom(\R^n,\R)$ with $H=\ker\phi$. Applying the face map induces the equation
	\[\sum_{i=1}^s \lambda_i\cdot F_\phi(P_i) = 0.\]
	Because of (\ref{hyperplane}), $F_\phi(P_i)$ is $m$-dimensional if and only if $P_i$ is $m$-dimensional and $P_i\in\P_T(U)$, or in other words $P_i\in\B_m^U\setminus\B_{m-1}$, and the remaining summands lie in $\P_T^{m-1}(U)$. Since $\B_m^U$ extends the basis $\B_{m-1}\cap\P_T(U)$ of $\P_T^{m-1}(U)$, we see that $\lambda_i = 0$ for all $i$ such that $F_\phi(P_i)$ is $m$-dimensional. In particular $\lambda_j=0$ and the proof is complete.
	
	The 'moreover'-part follows directly from the split exactness of the sequence
	\[ 0\to H\to\P(H)\to\P_T(H)\to 0\]
	which was first proved in \cite[Lemma 3.8 (2)]{FriedlLueck2015b}, but follows now also from the fact that $\P_T(H)$ is free-abelian.
\end{proof}

\begin{ex}
	Going through the proof of Theorem \ref{main theorem}, we see that a basis for $\P_T(\Z^2)$ is given by the set comprising the $1$-dimensional polytopes which are not proper multiples of another integral polytope, and their shadows, which are rectangular triangles.
\end{ex}

\begin{rmk}
	The above construction applies also to produce a basis for the real vector space $\P(\R^n)$. The only wording that needs to be replaced is \emph{pure subgroup} with \emph{subspace}.
\end{rmk}

\vspace{3mm}
\section{The involution as face Euler characteristic}

In this section we identify the involution on the polytope group with the following object. Recall that $\F(P)$ denotes the set of faces of a polytope $P$, including $P$ itself.

\begin{defin}[Face Euler characteristic]
	Given a polytope $P\subseteq V$, we call 
	\[ \chi_\F(P) = \sum_{F\in\F(P)} (-1)^{\dim(F)}\cdot F \in\P(V)\]
	the \emph{face Euler characteristic} of $P$.
\end{defin}

It is at this point not clear that $\chi_\F$ is additive on the Minkowski sum, so we do not immediately obtain a map on all of $\P(V)$. However, in this section we will prove the following.

\begin{thm}[Involution as face Euler characteristic]\label{main theorem 2}
	For any polytope $P\subseteq V$ we have in $\P(V)$
	\[ *P = - \chi_\F(P).\]
\end{thm}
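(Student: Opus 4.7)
The plan is to proceed by induction on $d := \dim P$. The base case $d = 0$ is immediate: for a point $P = \{v\}$, we have $*P = \{-v\} = -\{v\}$ in $\P(V)$ (because $\{v\} + \{-v\} = \{0\}$ is the identity element of $\P(V)$), which equals $-\chi_\F(P) = -\{v\}$.

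Assume the identity for polytopes of dimension less than $d$. The first (relatively elementary) step is to verify the identity for a pillar $P = Q + k\ZZ$, where $Q$ is flat of dimension $d-1$ and $k > 0$. The faces of such a $P$ split explicitly: $P$ itself, the bottom $Q$ and the top $Q + \{k\zz\}$, every face $F \in \F(Q)$ sitting on the bottom together with its translate $F + \{k\zz\}$ on the top, and the vertical face $F + k\ZZ$ for each $F \in \F(Q) \setminus \{Q\}$. Expanding $\chi_\F(P)$ using this enumeration and the Euler-characteristic identity $\sum_{F \in \F(Q)} (-1)^{\dim F} = 1$, combining with $*P = *Q + k\ZZ - \{k\zz\}$ (which follows from the one-dimensional relation $*\ZZ = \ZZ - \{\zz\}$), and then substituting $*Q = -\chi_\F(Q)$ from the inductive hypothesis, one simplifies using $P = Q + k\ZZ$ to obtain $*P + \chi_\F(P) = 0$.

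For a general $d$-dimensional polytope $P$, the plan is to decompose $P$ in $\P(V)$ into pillars and lower-dimensional polytopes using the machinery of Section 3. After WLOG reducing to $P \subset \R^n$ full-dimensional, vertical stretching (\cref{vertical stretching}) together with the cutting relation (\cref{cutting along a hyperplane}) splits $P + k(\ZZ + *\ZZ)$ along a flat hyperplane into a grounded half $P_+$ and a co-grounded half $P_-$. The shadow partition (\cref{shadow partition}) combined with the partition relation (\cref{partition relation}) rewrites $P_+$ as an alternating sum of grounded almost-pillars $P_+(F) = \Sh(F) + (\text{const}) \cdot *\ZZ$ (indexed by top faces $F$) plus lower-dimensional corrections, and iterating \cref{almost-pillars to pillars} reduces each such almost-pillar further to a pillar plus a smaller grounded almost-pillar plus a lower-dimensional face. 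After finitely many steps only pillars and polytopes of dimension less than $d$ remain.

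The main obstacle is that $\chi_\F$ is not a priori additive on $\P(V)$, so the identities in $\P(V)$ produced by the cutting and partition relations do not automatically furnish identities for $\chi_\F$. This is remedied by establishing compatibility of $\chi_\F$ with these relations as auxiliary lemmas---for instance, proving $\chi_\F(P_+) + \chi_\F(P_-) = \chi_\F(P) + \chi_\F(P \cap H)$ whenever $P + (P \cap H) = P_+ + P_-$ by a direct face-matching: each face of $P_+$ or $P_-$ is either a face of $P$ lying in the corresponding closed half-space or a face of $P \cap H$, and the $\F(P \cap H)$-contributions appear twice on the right and cancel the single copy on the left; an analogous compatibility is established for the partition relation. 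With this in hand, the pillar case and the inductive hypothesis combine via the additivity of $*$ to yield $*P + \chi_\F(P) = 0$ for all $P$.
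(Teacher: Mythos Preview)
Your overall architecture matches the paper's: induction on dimension, compatibility of $\chi_\F$ with the cutting and partition relations (these are the paper's \cref{cutting hyperplane for face euler} and \cref{partition euler face}), vertical stretching, and the shadow partition to reduce to special pieces. Your pillar computation is essentially the paper's \cref{face euler char of shadows}~(\ref{euler2}).

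The gap is in your reduction of the grounded almost-pillars $P_+(F)$ to pillars. You claim that iterating \cref{almost-pillars to pillars} produces a \emph{smaller} grounded almost-pillar at each step and hence terminates. This is false. Take $A$ to be the triangle with vertices $(0,0),(1,0),(1,1)$, a grounded almost-pillar with hypotenuse $F$ as top face. Applying \cref{almost-pillars to pillars} to $*A$ yields $*A = Q + *F - S$ with $S=\Sh(*F)$ the triangle with vertices $(0,0),(0,-1),(-1,-1)$. To continue you must now decompose $*S$; applying the lemma to $*S$ gives $S'=\Sh(F)=A$ again. The iteration cycles with period two, and no natural measure (vertical extent, area, height of the non-flat face) decreases. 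In the integral case one might hope for an integer invariant to drop, but the theorem is stated for arbitrary polytopes in $V$, and even integrally the example shows nothing shrinks.

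The paper avoids this altogether by proving the identity \emph{directly} for shadows of $(n{-}1)$-dimensional polytopes (\cref{face euler char of shadows}~(\ref{euler1})). One enumerates the faces of $\Sh(P)$ as triples $F$, $g(F)$, and the intermediate vertical face $\Sh(F)+(h(F)-h(P))\cdot *\ZZ$ for $F\in\F(P)$, applies the inductive hypothesis to $P$ and to its ground $G$, and then uses the additivity of the shadow and height maps together with one cutting relation relating $\Sh(*P)$ and $\Sh^+(*P)=*\Sh(P)$ inside a common pillar. With this lemma in place, the pieces $P(F)=\Sh(F)+k\cdot *\ZZ$ of the shadow partition are handled by combining it with \cref{face euler char of shadows}~(\ref{euler3}), and no reduction to pillars is needed.
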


 This theorem is inspired by \cite[Theorem 2]{KhovanskiiPukhlikov1992} and \cite[Theorem 2]{McMullen1989}. The latter result takes formally precisely the same form as ours, but it is there an equation in a so-called \emph{polytope algebra} that carries the Minkowski sum as \emph{multiplication}, and what we call cutting relation (see \cref{cutting along a hyperplane}) as addition. We emphasize that \cref{main theorem 2} restricts also to the integral polytope group since the faces of an integral polytope are integral.

 The following corollary can be seen as a combinatorial reminiscence of the fact that the Euler characteristic of a closed odd-dimensional manifold vanishes and the Euler characteristic of a closed even-dimensional manifold which bounds a compact manifold is even.

\begin{cor}
	Let $P\subseteq V$ be a symmetric polytope. Then we have in $\P(V)$
	\[  
	\sum_{\substack{F\in\F(P) \\ \:F\neq P}} (-1)^{\dim(F)}\cdot F = 
	\begin{cases}
	0, &         \text{if } \dim(P) \text{ is odd};\\
	-2\cdot P, &         \text{if } \dim(P) \text{ is even}.
	\end{cases}	\]
\end{cor}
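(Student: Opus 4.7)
My plan is to derive the corollary directly from \cref{main theorem 2} by isolating the top-dimensional contribution in $\chi_\F(P)$ and exploiting symmetry. The theorem applied to $P$ gives
\[ *P = -\sum_{F\in\F(P)} (-1)^{\dim(F)}\cdot F = -(-1)^{\dim(P)}\cdot P \;-\; \sum_{\substack{F\in\F(P)\\ F\neq P}} (-1)^{\dim(F)}\cdot F,\]
where I have split off the codimension-$0$ face $P$ from the sum.

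The definition of a symmetric polytope means $*P = P$ as subsets of $V$, and hence as classes in $\P(V)$ as well. Substituting $*P = P$ and rearranging, I obtain
\[ \sum_{\substack{F\in\F(P)\\ F\neq P}} (-1)^{\dim(F)}\cdot F = -\bigl(1 + (-1)^{\dim(P)}\bigr)\cdot P.\]
The two cases of the corollary now fall out immediately: if $\dim(P)$ is odd, the coefficient is $0$; if $\dim(P)$ is even, the coefficient is $-2$.

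There is no real obstacle here. The content of the statement lies entirely in \cref{main theorem 2}; once that is available, the corollary is formal algebra in $\P(V)$ together with the defining property $*P = P$ of a symmetric polytope. The only minor point to check (which is built into the statement of \cref{main theorem 2}) is that the relation $*P = -\chi_\F(P)$ holds in $\P(V)$ rather than just as a combinatorial identity, so that we may freely rearrange and substitute it with the class of $P$ itself.
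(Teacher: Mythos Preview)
Your argument is correct and is exactly the derivation the paper has in mind: the corollary is stated without proof immediately after \cref{main theorem 2}, and your computation---splitting off the top face from $\chi_\F(P)$, substituting $*P=P$, and reading off the coefficient $-(1+(-1)^{\dim P})$---is the intended one-line deduction.
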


The second corollary does not seem to be trivial right from the definitions either.

\begin{cor}
	Given polytopes $P,Q\subseteq V$ we have 
	\[\chi_\F(P+Q) = \chi_\F(P) + \chi_\F(Q).\]
\end{cor}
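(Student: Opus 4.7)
The plan is to deduce this corollary directly from \cref{main theorem 2}. Since the face Euler characteristic is not a priori additive on Minkowski sums (this is precisely what we want to show), the natural strategy is to transport the question to an additivity statement that is already known, namely the additivity of the involution $*$ on $\P(V)$.

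Concretely, I would proceed in three short steps. First, recall that $*\colon \P(V) \to \P(V)$ is a group homomorphism by its very definition (it is induced by the map $P - Q \mapsto *P - *Q$ on formal differences). Second, apply \cref{main theorem 2} to the polytope $P + Q$ and to each summand to rewrite $\chi_\F(P+Q)$, $\chi_\F(P)$, and $\chi_\F(Q)$ as $-*(P+Q)$, $-*P$, and $-*Q$ respectively. Third, combine these via the computation
\[
\chi_\F(P+Q) \;=\; -*(P+Q) \;=\; -*P - *Q \;=\; \chi_\F(P) + \chi_\F(Q),
\]
which yields the desired equality in $\P(V)$.

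There is essentially no obstacle here once \cref{main theorem 2} is in hand; the entire content of the corollary is already packaged in that theorem, together with the trivial additivity of the reflection involution. In particular, no further geometric analysis of face lattices of Minkowski sums (which would otherwise be the natural, but combinatorially delicate, direct approach) is required.
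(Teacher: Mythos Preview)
Your proposal is correct and is exactly the intended argument: the corollary is placed immediately after \cref{main theorem 2} with no separate proof precisely because it follows from $\chi_\F(R)=-*R$ together with the obvious additivity of $*$ on $\P(V)$. There is nothing to add.
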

\medskip

The strategy for the proof of \cref{main theorem} serves as a road map for proving \cref{main theorem 2}: We show a partition relation for face Euler characteristics, prove the statement for shadows, and combine these two facts to obtain the claim for any grounded polytope. The general case follows easily from this special case.

\begin{lem}\label{cutting hyperplane for face euler}
	Let $P\subseteq\R^n$ be a polytope and $H\subseteq\R^n$ be a hyperplane. Denote the two halves of $P$ with respect to $H$ by $P_+$ and $P_-$. Then 
	\begin{equation}\label{euler char sum}
		\chi_\F(P) +  \chi_\F(P\cap H) = \chi_\F(P_+) + \chi_\F(P_-).
	\end{equation}
	If \cref{main theorem 2} holds for any three of the polytopes $P, P_+, P_-, P\cap H$, then it also holds for the fourth.
\end{lem}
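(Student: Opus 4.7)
The second assertion is the easier direction: applying the involution $*$ to the cutting relation $P_1 + P_2 = P + (P\cap H)$ from \cref{cutting along a hyperplane} yields $*P_1 + *P_2 = *P + *(P\cap H)$, and combining this with equation~(\ref{euler char sum}) shows that whenever $*X = -\chi_\F(X)$ holds for three of the four polytopes $X\in\{P,P_1,P_2,P\cap H\}$, the same formula is forced for the fourth by elementary arithmetic in $\P(V)$. The real content is therefore equation~(\ref{euler char sum}) itself.

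My approach to (\ref{euler char sum}) is a careful bookkeeping of faces. First I would partition $\F(P)$ into four classes according to how each face sits with respect to $H$:
\[
    \F_1^\circ = \{F: F\subseteq P_1, F\cap H = \emptyset\},\quad
    \F_2^\circ = \{F: F\subseteq P_2, F\cap H = \emptyset\},
\]
\[
    \F_H = \{F: F\subseteq H\},\quad
    \F_c = \{F: F \text{ crosses } H\}.
\]
Then I would establish the disjoint decomposition
\[ \F(P_1) = \F_1^\circ \;\sqcup\; \{F\cap P_1 : F\in\F_c\} \;\sqcup\; \F(P\cap H),\]
by analyzing a generic face $G = F_\psi(P_1)$: if $G\not\subseteq H$ then the $\psi$-maxima on $P$ and $P_1$ agree, forcing $G = F_\psi(P)\cap P_1$ with $F_\psi(P)\in\F_1^\circ\cup \F_c$, whereas if $G\subseteq H$ then $G\in\F(P\cap H)$. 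Injectivity of $F\mapsto F\cap P_1$ on $\F_c$ follows from the standard identity $F = P\cap \operatorname{aff}(F)$. An analogous decomposition holds for $\F(P_2)$. Substituting both into $\chi_\F(P_1) + \chi_\F(P_2)$ and invoking \cref{cutting along a hyperplane} on each $F\in\F_c$ to rewrite $F\cap P_1 + F\cap P_2 = F + (F\cap H)$, equation~(\ref{euler char sum}) reduces to the identity
\begin{equation*}
    \chi_\F(P\cap H) = \sum_{F\in\F_H}(-1)^{\dim F} F \;-\; \sum_{F\in\F_c}(-1)^{\dim F}(F\cap H).
\end{equation*}

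To finish I would exhibit a bijection $\F_c \to \F(P\cap H)\setminus \F_H$ given by $F\mapsto F\cap H$. Given $G\in\F(P\cap H)\setminus\F_H$, pick $g$ in the relative interior of $G$ and let $F$ be the smallest face of $P$ containing $g$; then $F$ cannot lie in $\F_H$ (otherwise $G$ would itself be a face of $P$ in $H$) and cannot be strictly on one side of $H$ (since $g\in H$), so $F\in\F_c$, and one verifies that $g$ lies in the relative interior of $F\cap H$, which forces $G = F\cap H$. Injectivity is immediate since the smallest face of $P$ through an interior point is uniquely determined. The dimension shift $\dim(F\cap H) = \dim F - 1$ flips the sign and delivers the desired identity. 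I expect this final bijection to be the main obstacle: the argument rests on the fact that the relative interiors of the faces of $P$ partition $P$ and that this partition is compatible with the face structure of the slice $P\cap H$ via the operation $F\mapsto F\cap H$, which is the point requiring the most delicate verification.
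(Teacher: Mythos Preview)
Your approach is morally the same face-by-face bookkeeping as the paper's, only organized via explicit decompositions and a terminal bijection rather than the paper's direct four-case check. However, your four classes $\F_1^\circ,\F_2^\circ,\F_H,\F_c$ do not exhaust $\F(P)$: you have omitted exactly those faces $F$ with $F\subseteq P_i$, $F\not\subseteq H$, but $F\cap H\neq\emptyset$ (so $F\cap H$ is a proper face of $F$). Concretely, take the square pyramid with apex $(0,0,1)$ and base $[-1,1]^2\times\{0\}$, and $H=\{x_1=0\}$: the slant edge from the apex to $(1,1,0)$ lies in $P_1$, meets $H$ only at the apex, and falls into none of your four classes. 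The paper's proof singles out precisely this situation as its case~(3).

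This omission breaks your claimed decomposition of $\F(P_1)$: such an edge is a face of $P_1$ but is not in $\F_1^\circ$, not of the form $F\cap P_1$ for $F\in\F_c$, and not in $\F(P\cap H)$. Consequently the reduction to your displayed identity for $\chi_\F(P\cap H)$ is not justified as written. The fix is painless: replace $\F_i^\circ$ by $\F_i=\{F\in\F(P):F\subseteq P_i,\ F\not\subseteq H\}$. With this enlargement your partition of $\F(P)$ becomes genuine, your decomposition of $\F(P_1)$ is correct (since a relative-interior point of $F\cap P_1$ for $F\in\F_c$ is a relative-interior point of $F$, so $F\cap P_1\notin\F(P)$ and the three pieces stay disjoint), and the rest of your argument --- including the bijection $\F_c\to\F(P\cap H)\setminus\F_H$, which is fine --- goes through unchanged. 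Your justification ``cannot be strictly on one side of $H$ (since $g\in H$)'' in the bijection step should likewise be sharpened to: $g$ lies in the relative interior of $F$, and if $F$ were contained in one closed half-space but not in $H$ then $F\cap H$ would be a proper face of $F$ containing $g$, a contradiction.
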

\begin{proof}
	We distinguish four cases as to how $H$ cuts a face $F\in\F(P)$:
	\begin{enumerate}
		\item If $F\cap H = \emptyset$, then $F$ is a face of one of the $P_i$ and contributes $(-1)^{\dim(F)}\cdot F$ to both sides of (\ref{euler char sum}).
		\item If $F\cap H = F$, then $F$ is a face of $P_+, P_-$ and $P\cap H$, and it contributes $(-1)^{\dim(F)}\cdot 2 F$ to both sides.
		\item If $F\cap H\neq F$ and $F\cap H$ is a face of $F$, then $F$ is a face of exactly one $P_{\pm}$ and contributes $(-1)^{\dim(F)}\cdot F$ to both sides. (Note that $F\cap H$ will itself then fall into case (2).)
		\item Otherwise, the cutting relation (see \cref{cutting along a hyperplane}) yields 
		\begin{equation}\label{eq34}
			F + (F\cap H) = F_+ +  F_-
		\end{equation} 
		for the two halves of $F$ with respect to $H$. But $F\cap H$ is also a face in $P_+, P_-$ and $P\cap H$ which is not covered by the other cases. This means that $F$ contributes $(-1)^{\dim(F)}\cdot(F - (F\cap H))$ to the left-hand side and $(-1)^{\dim(F)}\cdot(F_+ + F_- -2\cdot (F\cap H))$ to the right-hand side. These two values coincide by (\ref{eq34}).
	\end{enumerate} 
	Every summand of the face Euler characteristics has now been accounted for exactly once, so that (\ref{euler char sum}) follows. The last statement follows from comparing this with $*P + *(P\cap H) = *P_+ + *P_-$.
\end{proof}

\begin{prop}[Partition relation for face Euler characteristics]\label{partition euler face}
	Let $P\subseteq\R^n$ be a polytope and $\Pi$ be a partition of $P$. Then we have in $\P(\R^n)$ the equation
	\[ \chi_\F(P) = \sum_{Q\in\Pi^\partial} (-1)^{\codim (Q\subseteq P)} \cdot \chi_\F(Q).\]
	In particular, if \cref{main theorem 2} holds for all elements in $\Pi$, then it also holds for $P$.
\end{prop}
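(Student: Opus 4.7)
The plan is to mirror the proof of the partition relation (Proposition \ref{partition relation}) step by step, with the polytope $P$ replaced by its face Euler characteristic $\chi_\F(P)$ and with \cref{cutting hyperplane for face euler} playing the role of the cutting relation (\cref{cutting along a hyperplane}) in the base case. Throughout we may assume $P$ is full-dimensional since the formula does not depend on the ambient space.

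First I would handle partitions $\P$ that arise from a finite collection of hyperplanes $H_1,\dots,H_m\subseteq\R^n$ as in \cref{ex:partition}(\ref{partition from hyp}), and induct on $m$. The base case $m=1$ is precisely \cref{cutting hyperplane for face euler}: rearranging equation \eqref{euler char sum} gives
\[\chi_\F(P) = \chi_\F(P_+) + \chi_\F(P_-) - \chi_\F(P\cap H_1),\]
which is exactly the claimed formula since $P_\pm$ are the codimension $0$ pieces and $P\cap H_1$ is the unique codimension $1$ piece in $\P^\partial$. For the induction step from $m-1$ to $m$, let $P_\pm$ be the two halves of $P$ with respect to $H_m$ and $P_H = P\cap H_m$. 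By \cref{cutting hyperplane for face euler} and the inductive hypothesis applied to the partitions $\P_\pm,\P_H$ of $P_\pm,P_H$ with respect to $H_1,\dots,H_{m-1}$, I obtain
\[\chi_\F(P) = \chi_\F(P_+)+\chi_\F(P_-)-\chi_\F(P_H) = \sum_{\star\in\{+,-,H\}}\sum_{Q\in\P_\star^\partial}(-1)^{\codim(Q\subseteq P_\star)}\cdot\chi_\F(Q).\]
The conclusion then follows from the very same disjoint decomposition $\P^\partial=\P_+^\partial\amalg\P_-^\partial\amalg\P_H^\partial$ and the identities $\codim(Q\subseteq P)=\codim(Q\subseteq P_\pm)$ for $Q\in\P_\pm^\partial$ and $\codim(Q\subseteq P)=\codim(Q\subseteq P_H)+1$ for $Q\in\P_H^\partial$, used already in the proof of \cref{partition relation}.

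For a general partition $\P$, I would copy the reduction trick from the proof of \cref{partition relation}: let $\H$ be the collection of hyperplanes spanned by the codimension $1$ elements of $\P$, and let $\QQ$ (resp.\ $\QQ_S$ for $S\in\P$) be the partition of $P$ (resp.\ of $S$) induced by $\H$. The first part of the proof applies to $\QQ$ and to each $\QQ_S$, giving formulas for $\chi_\F(P)$ and $\chi_\F(S)$. Combining these with the disjoint decomposition $\QQ^\partial=\coprod_{S\in\P^\partial}\QQ_S^\partial$ and the additivity $\codim(Q\subseteq P)=\codim(Q\subseteq S)+\codim(S\subseteq P)$ yields the desired identity by the same chain of equalities that concluded the proof of \cref{partition relation}.

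Finally, for the ``in particular'' statement, I would combine \cref{partition relation} with the formula just proved: assuming $*Q=-\chi_\F(Q)$ for every $Q\in\P^\partial$, applying the involution to the partition relation for $P$ gives
\[*P = \sum_{Q\in\P^\partial}(-1)^{\codim(Q\subseteq P)}\cdot{*Q} = -\sum_{Q\in\P^\partial}(-1)^{\codim(Q\subseteq P)}\cdot\chi_\F(Q) = -\chi_\F(P).\]
The main obstacle is really confined to \cref{cutting hyperplane for face euler}, whose case analysis must be set up so that every face of $P$, of $P_\pm$, and of $P\cap H$ is counted with the correct multiplicity on each side; once that bookkeeping is right, the rest is a near-verbatim transcription of the proof of \cref{partition relation}.
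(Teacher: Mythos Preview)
Your proposal is correct and is exactly the approach the paper takes: the paper's own proof consists of a single sentence stating that the result follows from \cref{cutting hyperplane for face euler} in precisely the same way that \cref{partition relation} follows from \cref{cutting along a hyperplane}, and you have faithfully unpacked that parallel argument. Your treatment of the ``in particular'' clause via the involution applied to \cref{partition relation} is also the intended one.
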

\begin{proof}
	This follows from \cref{cutting hyperplane for face euler} in exactly the same way as \cref{partition relation} (the partition relation) follows from \cref{cutting along a hyperplane} (the cutting relation).
\end{proof}

Next we show that \cref{main theorem 2} is true for the pieces in a shadow partition (see \cref{shadow partition}).

\begin{lem}[Face Euler characteristics of shadows]\label{face euler char of shadows}
	Assume that \cref{main theorem 2} is known for polytopes of dimension at most $n-1$. 
	\begin{enumerate}
		\item\label{euler1} If $P\subseteq\R^n$ is a polytope of dimension at most $n-1$, then we have
		\[ *\Sh(P) = -\chi_\F(\Sh(P));\]
		\item\label{euler2} If $P\subseteq\R^n$ is a polytope of dimension at most $n-1$, then we have \[ *(P+\ZZ) = -\chi_\F(P+\ZZ);\]
		\item\label{euler3} Let $P\subseteq\R^n$ be a polytope. Then \cref{main theorem 2} holds for $P$ if and only if it holds for $P + \ZZ$ (or equivalently $P+*\ZZ$).
	\end{enumerate}
\end{lem}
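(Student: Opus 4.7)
The plan is to establish the three parts in the order (3), (2), (1), since (3) is the linchpin.

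For part (3), I will reduce the claim to the algebraic identity
\[
\chi_\F(P+\ZZ) = \chi_\F(P) - *\ZZ
\]
in $\P(V)$. Once this is proved, (3) follows at once: expanding $*(P+\ZZ) = *P + *\ZZ$ gives $\Delta(P+\ZZ) = \Delta(P)$ for $\Delta := * + \chi_\F$, so $\Delta(P+\ZZ)=0$ iff $\Delta(P)=0$. I would first verify the identity when $P$ is flat (lying in a translate of $\zz^\perp$): in this case $P+\ZZ$ is combinatorially a prism and its face set is the disjoint union $\{F,\; F+\zz,\; F+\ZZ : F\in\F(P)\}$ with $\dim(F+\ZZ)=\dim F + 1$, so using $\sum_F (-1)^{\dim F} = 1$ and $*\ZZ = \ZZ - \zz$,
\[
\chi_\F(P+\ZZ) = \sum_{F \in \F(P)} (-1)^{\dim F}\bigl[F + (F+\zz) - (F+\ZZ)\bigr] = \chi_\F(P) + \zz - \ZZ = \chi_\F(P) - *\ZZ.
\]
For general $P$ I would induct via the cutting relation for $\chi_\F$ (\cref{cutting hyperplane for face euler}) applied to $P+\ZZ$: since a vertical hyperplane $H$ with $\phi(\zz)=0$ satisfies $(P+\ZZ)\cap H = (P\cap H)+\ZZ$ and $(P+\ZZ)_{\pm} = P_\pm + \ZZ$, the cross section has strictly smaller dimension (handled by the inductive hypothesis) while the half-dimensional pieces are handled by a secondary induction on combinatorial complexity. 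Orchestrating this nested induction is the trickiest step of (3).

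Part (2) is then immediate: by the standing inductive hypothesis of \cref{face euler char of shadows} the theorem holds for $P$ of dimension at most $n-1$, and part (3) transports it to $P+\ZZ$.

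Part (1) is the subtlest. When $P$ is flat then $\Sh(P) = P$, and when $\dim P \leq n-2$ one has $\dim \Sh(P) \leq n-1$; both subcases are already covered by the standing hypothesis. The remaining case is $\dim P = n-1$ with $P$ in a non-flat hyperplane, where by \cref{pillar} $\Sh(P)$ is an $n$-dimensional grounded almost-pillar. I would apply \cref{almost-pillars to pillars} to the reflected polytope $*\Sh(P)$, whose unique bottom face is $*P$, obtaining
\[
*\Sh(P) = Q + *P - \Sh(*P),
\]
where $Q = *G + m\,\ZZ$ is a pillar with $G = c_{h(P)}(P)$ flat and $m = h^+(P) - h(P)$. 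The theorem is already known for $*P$ and the flat polytope $*G$ by hypothesis, and propagates to the pillar $Q$ by $m$-fold application of part (3). The main obstacle is the summand $\Sh(*P)$, which is again an $n$-dimensional shadow outside the inductive hypothesis. I expect to close the loop by applying the same decomposition to $\Sh(*P)$ (using $**P = P$) and combining the two identities so that the unknown shadow contributions cancel, yielding the theorem for $\Sh(P)$ and $\Sh(*P)$ simultaneously.
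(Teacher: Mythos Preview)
Your plan has real gaps in both (3) and (1).

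\textbf{Part (3).} The identity $\chi_\F(P+\ZZ)=\chi_\F(P)-*\ZZ$ is correct and does reduce (3) to it; your prism computation for flat $P$ is fine and in fact extends verbatim to any $P$ of dimension $\leq n-1$ whose affine span does not contain the $\zz$-direction. The difficulty is the full-dimensional case. Cutting $P$ by a vertical hyperplane $H$ does give $I(P)+I(P\cap H)=I(P_+)+I(P_-)$ for $I(\,\cdot\,):=\chi_\F(\,\cdot\,+\ZZ)-\chi_\F(\,\cdot\,)+*\ZZ$, and $I(P\cap H)=0$ by dimension induction; but $P_\pm$ are still $n$-dimensional and no combinatorial invariant that I can identify strictly decreases (vertex count can go up, the height $h^+-h$ is unchanged, etc.). Your ``secondary induction on combinatorial complexity'' is therefore not a proof. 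The paper avoids this by \emph{partitioning} $P+\ZZ$ into $P$ together with the prisms $F+\ZZ$ over the top faces $F$ of $P$; since each such $F$ has dimension $\leq n-1$, part (2) applies to the prisms, and comparing the partition relation for $*$ with that for $\chi_\F$ gives the equivalence.

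\textbf{Part (1).} The cutting underlying \cref{almost-pillars to pillars} indeed produces the relation $Q+*P=*\Sh(P)+\Sh(*P)$ with $Q$ a pillar, and combined with \cref{cutting hyperplane for face euler} this yields
\[
\chi_\F(*\Sh(P))+\chi_\F(\Sh(*P))=\chi_\F(Q)+\chi_\F(*P)=-*Q-P.
\]
Setting $a=\chi_\F(\Sh(P))+*\Sh(P)$ and $b=\chi_\F(\Sh(*P))+*\Sh(*P)$, this says $*a+b=0$. Repeating the construction with $*P$ in place of $P$ gives the pillar $Q'=*Q$ and the relation $a+*b=0$, which is just the $*$-image of the first equation. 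So you obtain only the equivalence ``\cref{main theorem 2} for $\Sh(P)$ $\Leftrightarrow$ \cref{main theorem 2} for $\Sh(*P)$'', and the hoped-for cancellation does not occur. The paper breaks this symmetry by first \emph{computing} $\chi_\F(\Sh(P))$ directly from the face structure of $\Sh(P)$ (top faces $=\F(P)$, bottom faces $=\F(G)$ for the ground $G$, and vertical faces are shifted shadows of faces of $P$); using the additivity of $\Sh$ and of the height $h$ together with the inductive hypothesis on $P$ and $G$, this simplifies to $\chi_\F(\Sh(P))=-*P-*G+\Sh(*P)-h\cdot *\ZZ$ for an explicit integer $h$. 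Only then is the pillar $Q=*G+h\cdot*\ZZ=\Sh(*P)\cup *\Sh(P)$ cut along $*P$ to trade $\Sh(*P)$ for $*\Sh(P)$, finishing the computation.
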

\begin{proof}
	(\ref{euler1}) We may assume that $\Sh(P)$ is of dimension $n$. Recall that $\Sh(P)$ is grounded by \cref{pillar}. Let $G\subset \Sh(P)$ be its ground and $g:\Sh(P) \to G$ be the grounding map. Every face $F\subset P$ such that $F \neq g(F)$ induces the following faces of $\Sh(P)$:
	\begin{enumerate}[label=(\roman*)]
		\item $F$ itself;
		\item $g(F)$, which has the same dimension as $F$;
		\item The intermediate face $\Sh(F) +  (h(F) - h(P))\cdot *\ZZ$, which has dimension $\dim(F) + 1$. Alternatively, this face equals $\hull(F \cup g(F))$.
	\end{enumerate}
	
	If $F = g(F)$, then we have the equality
	\[ F +g(F) - (\Sh(F) +  (h(F) - h(P))\cdot *\ZZ) = F+ F-F = F.\] 
	Hence we may as well take the three summands above instead of $F$ in the following calculations. In this way we avoid a case analysis and notational overload.
	
	The subsets of $\F(\Sh(P))$ corresponding to faces of type (i) and (ii) are $\F(P)$ and $\F(G)$, respectively. By assumption we have $*P = -\chi_\F(P)$ and $*G = -\chi_\F(G)$ since these are polytopes of dimension $n-1$. Now we calculate using the additivity of the shadow map (see \cref{shadow lemma}) 
	\begin{align}\label{eq:12}
	\begin{split}
	&\;\chi_\F(\Sh(P)) \\
	=&\; \sum_{F\in\F(\Sh(P))} (-1)^{\dim(F)}\cdot F \\
	=&\; \sum_{F\in\F(P)} (-1)^{\dim(F)}\cdot F  + \sum_{F\in\F(G)} (-1)^{\dim(F)}\cdot F \\ +&\; \sum_{F\in\F(P)} (-1)^{\dim(F)+1}\cdot (\Sh(F) +(h(F) - h(P))\cdot *\ZZ)\\ 
	=& \;\chi_\F(P) + \chi_\F(G) - \Sh(\chi_\F(P)) - \sum_{F\in\F(P)} (-1)^{\dim(F)}\cdot (h(F) - h(P))\cdot *\ZZ\\
	=& \; -*P - *G+\Sh(*P) -  h\cdot *\ZZ,
	\end{split}
	\end{align}
	where we put
	\[h = \sum_{F\in\F(P)} (-1)^{\dim(F)}\cdot (h(F) - h(P)).\] 
	Note that since the faces of $P$ determine a cell structure on $P$, we have 
	\[\sum_{F\in\F(P)} (-1)^{\dim(F)} = \chi(P) = 1\]
	and hence
	\begin{align}\label{eq:37}
	\begin{split}
	h =&\sum_{F\in\F(P)} (-1)^{\dim(F)}\cdot(h(F) - h(P)) \\
	=& -  h(P)\cdot \sum_{F\in\F(P)} (-1)^{\dim(F)} + \sum_{F\in\F(P)} (-1)^{\dim(F)}\cdot h(F) \\
	=& \;- h(P) + h(\chi_\F(P)) \\
	=& \; - h(P)-h(*P).
	\end{split}
	\end{align}
	
	Recall from \cref{rem:upper maps} that we may define a height and shadow map in the opposite direction
	\[ h^+\colon \P(\R^n)\to\R^n\;\;\text{ and }\;\;\Sh^+\colon \P(\R^n)\to \P(\R^n)\]
	satisfying the equations
	\begin{equation}\label{eq:face euler for shadows 12}
		h^+(*P) = -h(P)\;\;\text{ and }\;\; \Sh^+(*P) = *\Sh(P).
	\end{equation}
	
	Now consider the pillar 
	\[\Sh(*P)\cup\Sh^+(*P) = *G +  (h^+(P)-h(P))\cdot*\ZZ.\] 
	By (\ref{eq:37}) and (\ref{eq:face euler for shadows 12}) we have
	\[\Sh(*P) \cup *\Sh(P) = \Sh(*P)\cup\Sh^+(*P) = *G +  (h^+(P)-h(P))\cdot*\ZZ = *G +h\cdot*\ZZ.\]   
	By the cutting relation (see \cref{cutting along a hyperplane}), cutting this pillar along $*P$ gives
	\[  \Sh(*P) + *\Sh(P) = *G + h\cdot*\ZZ + *P. \]
	We conclude by comparing this with equation (\ref{eq:12})
	\[ *\Sh(P) = *G + h\cdot*\ZZ + *P-\Sh(*P) = -\chi_\F(\Sh(P)).\]
	
	(\ref{euler2}) This part is similar to the first one. The face analysis, which we leave to the reader, yields in this case
	\begin{align*}
	\chi_\F(P+\ZZ) &= \chi_\F(P) + \chi_\F(P+\zz) + \sum_{F\in\F(P)}(-1)^{\dim(F)+1}\cdot (F+\ZZ)\\
	&= 2\cdot\chi_\F(P) +\chi(P)\cdot\zz -\chi_\F(P) - \chi(P)\cdot\ZZ\\
	&= \chi_\F(P) +\zz-\ZZ\\
	&= -*P -*\ZZ\\
	&= -*(P+\ZZ).
	\end{align*}
	
	(\ref{euler3}) Assume that $\dim(P) = n$. There is a partition $\Pi$ of $P+\ZZ$ that has the pieces $P$ and $F+\ZZ$ for all top faces $F\subseteq P$, see \cref{partition of P+Z}.
	\begin{center}
		\begin{figure}[h]
			\captionsetup{width=0.7\textwidth}
			\begin{tikzpicture}[line cap=round,line join=round,>=triangle 45,x=1.0cm,y=1.0cm, scale = 1]
			\clip(-1.5, -2) rectangle (5.2,4.2);
			\draw[]  (5,-2) -- (-1,-1) -- (-0.5, 1.2) -- (0.2, 2) -- (1, 2.5) -- (2.7, 2.7) -- (3.4, 2.3) -- (4.7, 0.3) -- (5,-2);
	
			\draw[thick]  (5,-2) -- (-1,-1) -- (-1,0) -- (-0.5, 2.2) -- (0.2, 3) -- (1, 3.5) -- (2.7, 3.7) -- (3.4, 3.3) -- (4.7, 1.3) -- (5,-1) -- (5,-2);
			
			\draw[dashed]  (-0.5, 2.2) --(-0.5, 1.2); 
			\draw[dashed] (0.2,2) -- (0.2, 3); 
			\draw[dashed] (1,2.5) -- (1, 3.5);
			\draw[dashed] (2.7, 2.7) -- (2.7, 3.7);
			\draw[dashed] (3.4, 2.3) -- (3.4, 3.3);
			\draw[dashed] (4.7, 0.3)-- (4.7, 1.3); 
	
			\end{tikzpicture}
			\caption{A partition of $P+\ZZ$ with pieces $P$ and $F+\ZZ$ for all top faces $F\subseteq P$.}\label{partition of P+Z}
		\end{figure}
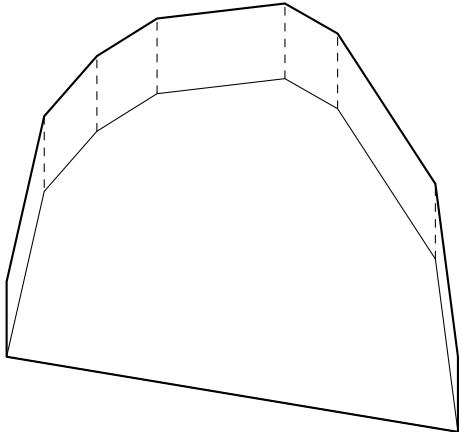
	\end{center}	
	
	By part (\ref{euler2}) and the assumption, \cref{main theorem 2} holds for all elements of this partition except possibly for $P$. Thus comparing the partition relation of polytopes (see \cref{partition relation})
	\[*(P +\ZZ)= *P + \sum_{Q\in\Pi^\partial, Q\neq P} (-1)^{\codim (Q\subseteq P)} \cdot *Q\] 
	with the partition relation for face Euler characteristics (see \cref{partition euler face})
	\[ \chi_\F(P+\ZZ) = \chi_\F(P) + \sum_{Q\in\Pi^\partial, Q\neq P} (-1)^{\codim (Q\subseteq P)} \cdot \chi_\F(Q)\]
	implies
	\[ *P = -\chi_\F(P) \;\;\text{ if and only if }\;\;  *(P+\ZZ) = -\chi_\F(P+\ZZ).\]
\end{proof}

For completeness we record the following trivial observation.

\begin{lem}\label{inversion face euler}
	For any polytope $Q$ we have $\F(*Q) = *\F(Q)$ and \cref{main theorem 2} is true for $Q$ if and only if it is true for $*Q$.
\end{lem}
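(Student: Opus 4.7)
The plan is to attack the two assertions in sequence, starting from the definition of a face via support functionals. To show $\F(*Q) = *\F(Q)$, I would observe that for any $\phi \in \Hom(V,\R)$ and any $p \in V$ one has $-\phi(-p) = \phi(p)$, so the rule $p \mapsto -p$ sets up a bijection between the set of maximizers of $\phi$ over $Q$ and the set of maximizers of $-\phi$ over $*Q$. In other words, $F_{-\phi}(*Q) = *F_\phi(Q)$. As $\phi$ ranges over $\Hom(V,\R)$, so does $-\phi$, and hence the assignment $F \mapsto *F$ yields the desired equality of face sets. Note that this bijection is dimension-preserving, because $*$ acts as $-\id$ on the affine span of $F$.

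For the second assertion I would first upgrade the face correspondence to an identity of face Euler characteristics. Since $*\colon\P(V)\to\P(V)$ is a group homomorphism and $\dim(*F) = \dim(F)$, the identity $\F(*Q) = *\F(Q)$ gives immediately
\[ \chi_\F(*Q) = \sum_{F \in \F(Q)} (-1)^{\dim(F)} \cdot *F = *\chi_\F(Q). \]
Now applying $*$ to the equation $*Q = -\chi_\F(Q)$ and using $** = \id$ on $\P(V)$ yields $Q = -*\chi_\F(Q) = -\chi_\F(*Q)$, which is precisely \cref{main theorem 2} for $*Q$. Reversing the argument gives the converse.

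There is essentially no obstacle here; the only thing one must be careful about is to recognize that reflection about the origin is a linear isomorphism of $V$, so it permutes the affine hyperplanes of $V$ and thereby permutes faces while preserving dimensions, and that $*$ commutes with the formal $\Z$-linear combinations defining $\chi_\F$ because it descends to a group homomorphism on $\P(V)$.
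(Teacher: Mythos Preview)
Your argument is correct and is exactly the routine verification the paper has in mind: the lemma is stated as a ``trivial observation'' and given no proof, and your computation $F_{-\phi}(*Q)=*F_\phi(Q)$ together with $\chi_\F(*Q)=*\chi_\F(Q)$ is precisely the unwinding that justifies it.
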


We are now ready to prove the main theorem of this section.

\begin{proof}[Proof of Theorem \ref{main theorem 2}]
	Let $P$ be an arbitrary polytope. We prove the claim by induction on the dimension of $P$. If $\dim(P) = 0$, then there is nothing to prove.
	
	Let now $n=\dim(P)$. By vertical stretching (\cref{vertical stretching}) and \cref{face euler char of shadows} (\ref{euler3})  we may assume that $P$ can be cut along a horizotal polytope into a grounded half $P_+$ and a half $P_-$ such that $*(P_-)$ is grounded. By the cutting relation for face Euler characteristics (\cref{cutting hyperplane for face euler}), \cref{inversion face euler}, and the induction hypothesis, it suffices to prove the claim in the special case that $P$ is grounded.
	
	We consider the shadow partition $\Pi$ of $P$ (see Proposition \ref{shadow partition}). \cref{main theorem 2} is true for all elements in $\Pi$ by the induction hypothesis and \cref{face euler char of shadows} (\ref{euler1}) and (\ref{euler3}). The two partition relations of \cref{partition relation} and \cref{partition euler face} then imply it for $P$.
\end{proof}

\vspace{3mm}
\section{Polytopes with the same seminorm}

The main result of \cite{ChaFriedlFunke2015} states that 
\[\ker \big(\id-*\colon \P(H)\to \P(H)\big) = \im\big(\id + *\colon \P(H)\to \P(H)\big).\]
In this section we prove the dual result and put it in context with the following seminorm map on the polytope group.

The set of (set-theoretic) maps $\Map(\Hom_\Z(H, \R), \R)$ is a group under pointwise addition. An integral polytope in $P\subseteq H\otimes\R$ induces a seminorm on $\Hom_\Z(H,\R)$ by setting
\[ \|\phi\|_P = \max\{\phi(p)-\phi(q)\mid p,q\in P\}. \]
It is easy to verify $\|\phi\|_{P+Q} = \|\phi\|_P + \|\phi\|_Q$ which allows us to make the following definition.

\begin{dfn}[Seminorm homomorphism]\label{def:seminorm map}
	We call
	\[ \norm\colon\P(H)\to \Map(\Hom(H,\R),\R),\;\; P-Q\mapsto \|\cdot \|_P - \|\cdot\|_Q\]
	\emph{seminorm homomorphism}. It passes to the quotient $\P_T(H)$ and the induced map
	\[\norm\colon\P_T(H)\to \Map(\Hom(H,\R),\R)\]
	is denoted by the same symbol.
\end{dfn} 

\begin{remark}\label{lem:equivalence norm}
	Let $H$ be a finitely generated free-abelian group. Then we have
	\begin{align*}
		\ker\big(\norm\colon  \P(H)\to \Map(\Hom(H,\R),\R)\big)  &= \ker \big(\id+*\colon \P(H)\to \P(H)\big);\\
		\ker\big(\norm\colon  \P_T(H)\to \Map(\Hom(H,\R),\R)\big) &= \ker \big(\id+*\colon \P_T(H)\to \P_T(H)\big).
	\end{align*}
	Namely, it is shown in \cite[Section 3.7]{FriedlLueck2015b} that two integral polytopes $P$ and $Q$ satisfy $P+*P = Q +*Q$ if and only if $\|\cdot \|_P = \|\cdot\|_Q$.
\end{remark}

Clearly this common kernel contains $\im\big(\id-*:\P(H)\to\P(H)\big)$ (respectively $\im\big(\id-*:\P_T(H)\to\P_T(H)\big)$), and in the remainder of this section we prove that this is indeed an equality. In \cite{ChaFriedlFunke2015} the strategy to prove the inclusion $\ker(\id -*) \subseteq \im(\id+*)$ was to stretch a hyperplane in the $\zz$-direction and then cut it along $\zz^\perp$ to obtain two halves which are involutions of each other. For the dual inclusion $\ker(\id +*) \subseteq \im(\id-*)$ we need to glue halves of different polytopes together. In order to ensure that this gluing process produces a polytope, we need the following lemma. Recall the compression maps $c_h\colon \R^n\to\R^n$ from (\ref{compression}).

\begin{lem}[Vertical gluing]\label{lem:vertical gluing}
	Let $H = \{x\in\R^n\mid x_n = h\}$ be a horizontal hyperplane. If $P, Q\subseteq\R^n$ are two (integral) polytopes such that 
	\begin{equation}\label{gluing assumption}
	P\cap H = c_h(P) = c_h(Q) = Q\cap H, 
	\end{equation} 
	then the set $P_+\cup Q_-$ is a (integral) polytope, where $P_+$ denotes the upper half of $P$ and $Q_-$ denotes the lower half of $Q$ with respect to $H$. 
	
	If additionally $h=0$, i.e. $H=\zz^\perp$, then we have:
	\begin{enumerate}
		\item\label{item1a} $(P+*P)\cap H = (P\cap H) + (*P\cap H)$;
		\item\label{item2a}  $(P+*P)_+ = P_+ + *(P_-)$;
		\item\label{item3a} $(P+*P)_- = P_- + *(P_+)$.
	\end{enumerate}
\end{lem}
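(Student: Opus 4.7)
The plan is to treat the four claims sequentially. For the fact that $P_+\cup Q_-$ is a polytope, I would first verify convexity directly: given $a\in P_+$ and $b\in Q_-$, the segment $[a,b]$ meets $H$ in a unique point $c=\lambda a+(1-\lambda)b$, where $\lambda\in[0,1]$ is determined by $\lambda a_n+(1-\lambda)b_n=h$. Since $c_h$ is affine, $c=c_h(c)=\lambda c_h(a)+(1-\lambda)c_h(b)$, and the hypothesis places $c_h(a)\in c_h(P)=P\cap H$ and $c_h(b)\in c_h(Q)=Q\cap H$; these sets coincide, so $c$ lies in both of them. The subsegments $[a,c]$ and $[c,b]$ then lie in $P_+$ and $Q_-$ respectively. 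To conclude that $P_+\cup Q_-$ is an (integral) polytope, I would identify its extreme points: those with $n$-th coordinate above $h$ are vertices of $P$, those below are vertices of $Q$, and those on $H$ must be extreme points of $P\cap H=c_h(P)$, because any non-trivial convex decomposition inside $P\cap H$ would already lie in $P_+\cup Q_-$. All three kinds of extreme points are integral when $P, Q$ are integral and $h\in\Z$.

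For part (1), I would exploit that when $h=0$, the map $c_0$ is \emph{linear} (not merely affine), so it commutes with both Minkowski sum and the involution $*$. The inclusion $\supseteq$ is immediate because $H$ is a linear subspace. For $\subseteq$, given $r\in (P+*P)\cap H$, write $r=p+q$ with $p\in P$ and $q\in *P$; then $r=c_0(r)=c_0(p)+c_0(q)$, and the hypothesis yields $c_0(p)\in P\cap H$ together with $c_0(q)\in *c_0(P)=*P\cap H$.

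Part (2) is the technical heart. The inclusion $\supseteq$ is routine bookkeeping on $n$-th coordinates. For $\subseteq$, I would employ a vertical sliding argument: given $r=p+q\in(P+*P)_+$, consider the one-parameter families $p(u)=p-u p_n\zz$ and $q(v)=q-v q_n\zz$ for $u,v\in[0,1]$. These are convex combinations of $p$ and $c_0(p)$ (respectively $q$ and $c_0(q)$), so they remain inside $P$ (respectively $*P$) by the hypothesis. The constraint $p(u)+q(v)=r$ reduces to $u p_n + v q_n=0$, and a quick case analysis on the signs of $p_n$ and $q_n$, using $r_n\geq 0$, produces admissible values $u,v\in[0,1]$ such that moreover $p(u)_n\geq 0$ and $q(v)_n\geq 0$. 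For instance, when $p_n\geq 0>q_n$, the inequality $r_n\geq 0$ forces $p_n\geq |q_n|$, so one may take $v=1$ and $u=|q_n|/p_n$; then $p(u)\in P_+$ and $q(v)=c_0(q)\in *P\cap H\subseteq *(P_-)$. Part (3) follows from (2) by applying $*$ to both sides, using that $*(P+*P)=P+*P$ and that $*$ exchanges upper and lower halves because $H=\zz^\perp$ is $*$-invariant.

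The main obstacle is ensuring in part (2) that the interpolated points $p(u)$ and $q(v)$ actually remain in their respective polytopes; this is precisely what the strong hypothesis $c_0(P)=P\cap H$ provides, since it forces the entire vertical segment from any $p\in P$ to its projection $c_0(p)$ to lie in $P$ by convexity, making the sliding construction available.
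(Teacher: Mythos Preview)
Your argument is correct and follows essentially the same route as the paper: convexity of $P_+\cup Q_-$ is verified by splitting a segment at its crossing point with $H$ and using the hypothesis to place that point in $P\cap H=Q\cap H$, and your ``vertical sliding'' in part~(2) is exactly the paper's substitution $p'=p+q_n\zz$, $q'=c_0(q)$ written with a parameter. Your treatment is slightly more detailed on the integral case and derives (3) from (2) via $*$ where the paper just says ``proved similarly'', but these are cosmetic differences.
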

\begin{proof}
	Denote the vertex sets of $P_+$ resp. $Q_-$ by $V(P_+)$ resp. $V(Q_-)$. We will show
	\[ P_+\cup Q_-  = \hull( V(P_+)\cup V(Q_-)),\]
	where the inclusion $\subset$ is obvious.
	
	For the reverse inclusion, it suffices to show that $P_+\cup Q_-$ is convex. Let $p\in P_+$ and $q\in Q_-$, and take a convex combination $x = t\cdot p + (1-t)\cdot q$. Since $P_+$ and $Q_-$ are convex, we may assume that $x\in H$ (and deal with other convex combinations inside $P_+$ and $Q_-$ individually). We can also write $x = t\cdot c_h(p) + (1-t)\cdot c_h(q)$. Assumption (\ref{gluing assumption}) then implies that $x\in P\cap H = Q\cap H \subset P_+\cup Q_-$. This finishes the proof of the first statement.
	
	\smallskip
	In the equalities (\ref{item1a}), (\ref{item2a}), and (\ref{item3a}), the inclusion $\supseteq$ is true irrespective of the assumption that $P\cap H = c_h(P)$. 
	
	\smallskip
	To prove $\subseteq$ in (\ref{item1a}), let $p\in P,\: q\in *P$ with $p_n+q_n = 0$. Then $p+q = c_0(p+q) = c_0(p)+c_0(q)$ which lies in $(P\cap H) + (*P\cap H)$ since by assumption $c_0(P) = P\cap H$ and thus $c_0(*P) = *c_0(P) = *(P\cap H) = *P\cap H$. 
	\smallskip
	
	To prove $\subseteq$ in (\ref{item2a}), let $p\in P,\: q\in *P$ with $p_n+q_n \geq 0$. If $p_n, q_n\geq 0$, then $p\in P_+$ and $q\in *(P_-)$ and we are done. If $p_n\geq 0$ and $q_n\leq 0$, then take $p' = p  + q_n\cdot\zz$ and $q' = q -q_n\cdot\zz = c_0(q)$. We have $p'\in P_+$ since it is a convex combination of $p$ and $c_0(p)\in P$, and we have $q'\in *(P_-)$ since $c_0(*P)  = *P\cap H\subset *(P_-)$. Thus $p+q = p'+q' \in P_+ + *(P_-)$.
	
	The third claim is proved similarly.
\end{proof}

\begin{thm}\label{main theorem 3}
	We have
	\[\ker \big(\id+*\colon \P(H)\to \P(H)\big) = \im\big(\id - *\colon \P(H)\to \P(H)\big)\]
	and 
	\[\ker \big(\id+*\colon \P_T(H)\to \P_T(H)\big) 
	=  \im\big(\id-*\colon \P_T(H)\to\P_T(H)\big).\]
\end{thm}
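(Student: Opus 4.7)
The inclusion $\im(\id-*)\subseteq\ker(\id+*)$ is immediate in both settings, since $(y-*y)+*(y-*y)=0$ for every $y$. For the reverse inclusion in $\P(H)$, the plan is to induct on $n=\rank(H)$; the base case $n=0$ is trivial because $\P(H)=0$. The inductive step exploits vertical stretching (\cref{vertical stretching}) and vertical gluing (\cref{lem:vertical gluing}) to cut the problem along $H=\zz^\perp$ into a term already of the form $y-*y$ plus a correction term living in the polytope group of the smaller lattice $H\cap\Z^n\cong\Z^{n-1}$.

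\textbf{Inductive step.} Take $x\in\ker(\id+*:\P(\Z^n)\to\P(\Z^n))$, write $x=P-Q$, and translate the kernel condition via cancellativity into the polytopal equality $P+*P=Q+*Q$. After enlarging both polytopes by a symmetric $n$-cube (which changes neither $P-Q$ nor the equality $P+*P=Q+*Q$), I may assume $P,Q$ are $n$-dimensional. Next I apply vertical stretching with one common $k\geq 0$ (the maximum of the values produced by the lemma for $P$ and $Q$ individually), replacing $P$ by $P+k(\ZZ+*\ZZ)$ and $Q$ by $Q+k(\ZZ+*\ZZ)$; because $\ZZ+*\ZZ$ is symmetric, this preserves both $x=P-Q$ and the equality $P+*P=Q+*Q$, while now ensuring $P\cap H=c_0(P)$ and $Q\cap H=c_0(Q)$.

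\textbf{The key algebraic move} is to combine the cutting relation (\cref{cutting along a hyperplane}) with the vertical gluing identities. Vertical gluing says $(P+*P)_+=P_++*P_-$ and $(P+*P)\cap H=(P\cap H)+*(P\cap H)$, and similarly for $Q$. Since $P+*P=Q+*Q$, comparing these yields
\[ P_+ + *P_- = Q_+ + *Q_- \quad\text{and}\quad (P\cap H)+*(P\cap H) = (Q\cap H)+*(Q\cap H).\]
Rearranging the first gives $P_+-Q_+ = *Q_--*P_-$, and substituting this into the cutting-relation expansion $P-Q=(P_+-Q_+)+(P_--Q_-)-((P\cap H)-(Q\cap H))$ collapses the first two summands to $(\id-*)(P_--Q_-)$. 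Thus
\[ P-Q = (\id-*)(P_--Q_-) \;-\; \bigl((P\cap H)-(Q\cap H)\bigr).\]
The first term manifestly lies in $\im(\id-*)$. The second term lies in $\P(H\cap\Z^n)\cong\P(\Z^{n-1})$ and, by the second vertical-gluing identity above, lies in $\ker(\id+*)$ there; by the inductive hypothesis it lies in $\im(\id-*)$ for $\P(\Z^{n-1})$, hence also in $\im(\id-*)$ for $\P(\Z^n)$ via the natural inclusion.

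\textbf{The $\P_T(H)$ case reduces to the $\P(H)$ case.} Given $x\in\ker(\id+*:\P_T(H)\to\P_T(H))$, lift it to $\tilde x\in\P(H)$ through the split surjection $\P(H)\twoheadrightarrow\P_T(H)$ with kernel $H$. Then $\tilde x+*\tilde x$ lies in $H\subseteq\P(H)$, and since $*$ acts as negation on this copy of $H$, applying $*$ to $\tilde x+*\tilde x$ yields $\tilde x+*\tilde x=-(\tilde x+*\tilde x)$, forcing $\tilde x+*\tilde x=0$ as $H$ is torsion-free. The $\P(H)$ result then produces $y$ with $\tilde x=y-*y$, and projecting gives the desired representation of $x$. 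The main obstacle I anticipate is verifying that vertical stretching can be carried out simultaneously for $P$ and $Q$ without spoiling $P+*P=Q+*Q$, and checking that the lower-dimensional correction term really lands in $\ker(\id+*)$ over $\Z^{n-1}$ so that the induction closes.
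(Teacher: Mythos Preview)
Your argument is correct. The overall scaffolding (induction on $n$, vertical stretching to arrange $P\cap\zz^\perp=c_0(P)$ and $Q\cap\zz^\perp=c_0(Q)$, and the reduction of the $\P_T$ case to the $\P$ case via the observation that a $*$-fixed element of $H\subseteq\P(H)$ must vanish) is the same as in the paper. The inductive step, however, follows a genuinely different route. The paper first invokes the induction hypothesis to find an integral polytope $R\subseteq\zz^\perp$ with $(P\cap\zz^\perp)+*R=(Q\cap\zz^\perp)+R$, sets $A=P+*R$, $B=Q+R$ so that $A\cap\zz^\perp=B\cap\zz^\perp$, then \emph{actually glues} $A_+$ and $B_-$ into a polytope $S$ using the first part of \cref{lem:vertical gluing}, and finally checks by a short calculation that $T-*T=A-B$ for $T=S-B$. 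You bypass the gluing entirely: from $(P+*P)_+=(Q+*Q)_+$ and \cref{lem:vertical gluing}(2) you extract $P_+-Q_+=-*(P_--Q_-)$ directly in the Grothendieck group, so the cutting relation collapses to $P-Q=(\id-*)(P_--Q_-)-\bigl((P\cap\zz^\perp)-(Q\cap\zz^\perp)\bigr)$, and only then do you apply induction to the lower-dimensional remainder. Your route is shorter and purely algebraic, never needing the set-theoretic union $A_+\cup B_-$ to be a polytope; the paper's route has the advantage of producing a concrete polytope witness $S$. The two concerns you flag are harmless: a common stretching parameter $k$ works because the conclusion of \cref{vertical stretching} is monotone in $k$, and adding the symmetric polytope $k(\ZZ+*\ZZ)$ to both $P$ and $Q$ preserves $P-Q$ and $P+*P=Q+*Q$; and the remainder $(P\cap\zz^\perp)-(Q\cap\zz^\perp)$ lies in $\ker(\id+*)$ over $\Z^{n-1}$ by \cref{lem:vertical gluing}(1).
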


\begin{proof}
	We deal with $\P(H)$ first. Again the inclusion $\supseteq$ in the claim $\ker(\id +*) = \im(\id-*)$ is obvious. For the opposite inclusion, we proceed again by induction on the rank of $H\cong \Z^n$. If $n=0$, then there is once more nothing to prove.
	
	Let $P-Q \in \ker(\id+ *)$, so
	\begin{equation}\label{same norm}
	P+*P = Q+*Q.
	\end{equation}
	After vertical stretching (see \cref{vertical stretching}), we may assume 
	\begin{equation}\label{cutting possible}
	P\cap H= c_0(P) \;\;\text{ and }\;\; Q\cap H = c_0(Q),
	\end{equation} 
	where here and henceforth we let $H = \zz^\perp$. Then \cref{lem:vertical gluing} (\ref{item1a}) together with (\ref{same norm}) implies
	\[	 (P\cap H) + (*P\cap H) = (Q\cap H) + (*Q\cap H).\]
	We may therefore apply the induction hypothesis to $(P\cap H) - (Q\cap H)$ and obtain an integral polytope $R$ contained in $H$ such that 
	\begin{equation}\label{cutting at H}
	(P\cap H) + *R = (Q\cap H) + R.
	\end{equation}
	Clearly $P+*R - (Q+R)\in\ker(\id+ *)$, and it suffices to prove that this element lies in $\im(\id-*)$. To ease notation, put $A = P+*R$ and $B = Q+R$. We see from (\ref{cutting possible}), (\ref{cutting at H}) and the fact that $R$ lies in $H$ the equalities
	\[G := c_0(A) = A\cap H =  (P\cap H) + *R = (Q\cap H) + R = B\cap H = c_0(B).\]
	We are therefore in the situation of \cref{lem:vertical gluing} so that the two halves $A_+$ and $B_-$ (with respect to $H$) can be glued together to give a polytope $S = A_+\cup B_-$. Moreover, \cref{lem:vertical gluing} (\ref{item3a}) gives
	\begin{equation}\label{implication}
	A_- + *(A_+) = (A+*A)_- = (B+*B)_- = B_- + *(B_+).
	\end{equation} 
	
	If we put $T = S-B$, then several applications of the cutting relation (see \cref{cutting along a hyperplane}) yield
	\begin{align*}
	T - *T &= S - *S - B + *B \\
	&= (A_+ + B_- - G) - (*A_+ + *B_- - *G) - (B_+ +B_- -G) + (*B_+ +*B_- -*G)\\
	&= A_+ + B_- + *B_+ - *A_+ - B_+ - B_-\\
	& \hspace{-2.3mm}\overset{(\ref{implication})}{=} A_+ + A_- + *A_+ - *A_+ - B_+ - B_-\\
	&= (A_+ + A_- -G) - (B_+ + B_- -G)\\
	&= A-B,
	\end{align*}	
	which completes the proof for $\P(H)$.
	
	\medskip
	We deduce the statement for the quotient $\P_T(H)$ as follows. The map
	\[ \sym\colon \P_T(H)\to\P(H), \;\; P-Q\mapsto P+*P - (Q+*Q)\]
	is well-defined and fits into the commutative diagram
	\[ \xymatrix{
		\P(H) \ar[r]^{\id +*} \ar[d] & \P(H) \ar[d]\\
		\P_T(H) \ar[r]^{\id+*} \ar[ur]^{\sym} & \P_T(H),
	}\]
	where the vertical maps are the projections. Since $\sym(x)$ is a difference of two  polytopes which are symmetric about the origin, $\sym(x)$ is a point if and only if it is zero. This implies 
	\[ \ker \big(\id+*\colon \P_T(H)\to \P_T(H)\big) 
	= \ker \big(\sym\colon \P_T(H)\to \P(H)\big). \]
	
	Because of the commutative diagram above, any preimage of an element $x\in \ker \big(\sym\colon \P_T(H)\to \P(H)\big)$ in $\P(H)$ lies in 
	\[\ker \big(\id+*\colon \P(H)\to \P(H)\big) = \im \big(\id-*\colon \P(H)\to \P(H)\big).\] 
	Thus 
	\[ \ker \big(\sym\colon \P_T(H)\to \P(H)\big) \subset \im \big(\id-*\colon \P_T(H)\to \P_T(H)\big)\]
	and the reverse inclusion is obvious. This finishes the proof of \cref{main theorem 3}.
\end{proof}

\begin{remark}
	It is in contrast to the previous theorem \emph{not} true that 
	\[ \ker \big(\id-*\colon \P_T(H)\to \P_T(H)\big) = \im \big(\id+*\colon \P_T(H)\to \P_T(H)\big)\]
	as can easily seen for $H=\Z$. 
\end{remark}
\medskip

\bibliography{bibliography}

\end{document}